\documentclass[11pt]{article}

\usepackage[top=50mm, bottom=50mm, left=50mm, right=50mm]{geometry}
\usepackage{lineno}
\usepackage{ stmaryrd }
\usepackage{amssymb}
\usepackage{amsmath}
\usepackage{amsthm}
\usepackage{epsfig}
\usepackage{graphicx}
\usepackage{graphics}
\usepackage{float}
\usepackage{subfigure}
\usepackage{multirow}
\usepackage{color}
\usepackage{lineno}
\usepackage{fullpage}
\usepackage[normalem]{ulem} 
\usepackage{makeidx}
\usepackage{xspace}
\usepackage{wrapfig}
\usepackage{tikz}
\usetikzlibrary{arrows,decorations.pathreplacing}
\makeindex

\newtheorem{theorem}{Theorem}

\newtheorem{corollary}{Corollary}
\newtheorem{lemma}{Lemma}

\newtheorem{proposition}{Proposition}

\newtheorem{question}{Question}

\newenvironment{manualtheorem}[1]{%
  \manualtheoreminner
}{\endmanualtheoreminner}

\newcommand{\A}{\mathbb A}
\newcommand{\nats}{\mathbb N}

\DeclareMathOperator{\cl}{Cl}
\DeclareMathOperator{\op}{Op}
\DeclareMathOperator{\Fact}{Fact}
\DeclareMathOperator{\Factr}{RecFact}


\begin{document}

\title{Open and closed complexity of infinite words}
 
\author{Olga Parshina\textsuperscript{$\triangleright$}\thanks{Supported by the Ministry of Education, Youth and Sports of the Czech Republic within the project no.~CZ.02.1.01/0.0/0.0/16019/0000778 and by the Ministry of Science and Higher Education of the Russian Federation (agreement no. 075–15–2022–287).} \and {Micka\"el Postic\textsuperscript{$\triangleleft$}
}}

\date{}

\maketitle
\vspace{-0.8cm}
\begin{center}
\textsuperscript{$\triangleright$} \small{Saint Petersburg University, 7/9 Universitetskaya nab., St. Petersburg, 199034 Russia \\
\textsuperscript{$\triangleleft$} Institut Camille Jordan, Universit\'e Claude Bernard Lyon 1, 43 bd du 11 novembre 1918,\\ Villeurbanne, 69622 France}
\end{center}
\bigskip

\setcounter{page}{1}

\begin{abstract}
In this paper we study the asymptotic behaviour of two relatively new complexity functions defined on infinite words and their relationship to periodicity. Given a factor $u$ of an infinite word $x$, we say $u$ is closed if it is a letter or if it is a complete first return to some factor $v$ of $x$; otherwise $u$ is said to be open. We show that for an aperiodic word $x$  over a finite alphabet, the complexity functions that count the number of closed and the number of open factors of $x$ of each given length are both unbounded. More precisely, we show that if $x$ is aperiodic then the limit inferior of the function of open complexity is infinite, and the limit superior of the function of closed complexity is infinite on any syndetic subset of positive integers. On the other hand, there exist aperiodic words for which limit inferior of the closed complexity function is finite.

\textbf{Keywords:} word complexity, periodicity, return words.
\end{abstract}

\section{Introduction}

A fundamental problem in many areas of mathematics is to describe local constraints that imply global regularities. An example of this local to global phenomena is found in the study of periodicity in the framework of symbolic dynamics. 
The factor complexity function $p_x$, first introduced by G.A.~Hedlund and M.~Morse in their 1938 seminal paper on Symbolic Dynamics \cite{MHed}, counts the number of distinct blocks (or {\it factors}) of each length occurring in an infinite word $x=x_1x_2x_3\cdots  $ over a finite set $\A$.
They proved that each {\it aperiodic} (meaning not ultimately periodic) infinite word contains at least $n+1$ distinct factors of each length $n,$ and hence in particular the sequence $(p_x(n))_{n\in \nats}$ is unbounded. They further showed that an infinite word $x\in \A^\nats$ has exactly $n+1$ distinct factors of each length $n$ if and only if $x$ is binary, aperiodic and balanced, i.e., $x$ is a Sturmian word (see \cite{MH2}). 
Sturmian words are aperiodic words of lowest factor complexity and they arise naturally in different areas of mathematics including combinatorics, algebra, number theory, ergodic theory, dynamical systems and differential equations. 

There are numerous variations and extensions of the Morse-Hedlund theorem associated with other complexity functions defined on infinite words $x\in \A^\nats$ including  {\it Abelian complexity}~\cite{CovHed, RSZ}, which counts the number of distinct Abelian classes of words of each length occurring in $x$, or  {\it palindrome complexity}~\cite{ABCD} counting the number of distinct palindromes of each length occurring in $x$, or {\it cyclic complexity} \cite{CFSZ} counting the number of conjugacy classes of factors of each length in~$x$.
As in the case of the Morse and Hedlund theorem, in most cases these different complexity functions may be used to characterise aperiodicity in words.    

In this paper we investigate two new and complementary complexity functions defined on infinite words, and their relation to aperiodicity. 
Given a non-empty set $\A,$ called the {\it alphabet}, we let $\A^+$ denote the free semigroup generated by $\A$, and $\A^\nats$ the set of all right infinite words  $x~=~x_1x_2x_3\cdots$ with each $x_i \in \A.$ We will in general assume the alphabet to be finite unless stated otherwise. 
Following terminology first introduced by G.~Fici in~\cite{Fici_oc}, we say a word $w\in \A^+$ is {\it closed} if either $w\in \A$ or there exists $v\in \A^+$ which occurs in $w$ precisely twice, once as a prefix and once as a suffix.
Otherwise we say $w$ is {\it open}. 
For example, $w=abaaaab$ is closed (taking $v$ as above equal to $ab$) while $aabab$ and $aabaaa$ are both open.

We consider the complexity functions $\cl_x$ (resp.\ $\op_x)$ which associate to each infinite word $x\in \A^\nats$ the number of closed (resp.\ open) factors of $x$ of each given length. 
We note that every closed word $w\in \A^+$ either belongs to $\A$ or may be written in the form $w=uv=vu'$ for some choice of $u, u', v\in \A^+$, and moreover $w$ has no other occurrences of $v$ other than the two witnessed by the above factorisations. 
Thus in the language of symbolic dynamics, a closed factor $w\in \A^+\setminus \A$ of an infinite word $x\in \A^\nats$ is called a {\it complete first return to $v$ in $x$} and the factor $u$ is called a {\it return word} or {\it first return} to $v$ in $x.$ 

Return words constitute a powerful tool in the study of symbolic dynamical systems. For example, they play an important role in the theory of substitution dynamical systems. Return words were used by F.~Durand~\cite{Durand} and independently by C.~Holton and L.Q.~Zamboni in~\cite{HolZam1} to define so-called {\it derived words} and {\it derived substitutions} both of which may be used to characterise infinite words generated by primitive substitutions.
An analogous characterisation was later discovered by N.~Priebe~\cite{Priebe} in the framework of bi-dimensional tilings using the notion of derived tilings involving Vorono\"{\i} cells.
In~\cite{DurHS}, F.~Durand et al.\ derived a simple algorithm using return words for computing the dimension group of minimal Cantor systems arising from primitive substitutions.

A slightly different notion of return words was used by S.~Ferenczi, C.~Mauduit and A.~Nogueira~\cite{FerMaudNog} to compute the eigenvalues of the dynamical system associated to a primitive substitution.
Return words were an essential tool used by T.~Harju, J.~Vesti and L.Q.~Zamboni in~\cite{HVZ} to give a partial answer to a question posed by A.~Hof, O.~Knill and B.~Simon in~\cite{HKS} on a sufficient combinatorial criterion on the subshift $\Omega$ of the potential of a discrete Schr\"odinger operator which guarantees purely singular continuous spectrum on a generic subset of $\Omega.$ 

There are many other examples of the use of return words in the study of more general symbolic dynamical systems. 
In~\cite{Vuillon}, L.~Vuillon showed that an infinite binary word $x$ is Sturmian if and only if each factor of $x$ admits exactly two first returns in $x.$  
We observe that a recurrent word $x\in \A^\nats$ containing a factor $v$ having only one first return in $x$ is necessarily ultimately periodic, i.e., $x=u'u^\nats $ where $u$ is the unique first return to $v$ in $x.$
Words having exactly $k$ first returns to each factor for $k\geq 3$ have also been extensively studied (see for example \cite{BalPelST}) and include the symbolic coding of orbits under a $k$-interval exchange transformation~\cite{KS} as well as Arnoux-Rauzy words~\cite{AR} on a $k$-letter alphabet. Finally, there has been much recent interest in open and closed words in the framework of combinatorics on words and we refer the interested reader to the survey article by G.~Fici~\cite{surveyFici}.

Given an infinite word $x\in \A^\nats,$ we are interested in the asymptotic behaviour of the complexity functions $\cl_x$ and $\op_x$ and their relationship to periodicity. As every finite word $w\in \A^+$ is either open or closed, one has $p_x(n)=\op_x(n)+\cl_x(n)$ for each $n\in\nats$. 
Thus if $x$ is aperiodic, then it follows by the Morse and Hedlund theorem that at least one of the two sequences $(\op_x(n))_{n\in \nats},\, \, (\cl_x(n))_{n\in \nats}$ is unbounded. 
For instance, in~\cite{ParZamb} the first author together with L.Q.~Zamboni obtained explicit formulae for the closed and open complexity functions for Arnoux-Rauzy words on a $k$-letter alphabet (and hence in particular Sturmian words). 
They also showed that $\liminf \cl_x(n)=+\infty$ when $x$ is an Arnoux-Rauzy word.
However, for a general aperiodic word, the $\liminf \cl_x(n)$ may be finite, and in fact in~\cite{SchSh}, L.~Schaeffer and J.~Shallit proved that for the regular paperfolding word one has $\liminf \cl_x(n)=0,$ which is somewhat surprising. 
More generally, they showed that in the case of automatic sequences, the property of being closed is expressible in first-order logic, which allows them to compute the closed complexity for various well known infinite words including  the Thue-Morse word, the Rudin-Shapiro word, the ordinary paperfolding word and the period-doubling word (see for instance~\cite{AS}). 
One essential difference between the usual factor complexity on one hand, and the open and closed complexities on the other, is that the latter complexities are not in general monotone (e.g.\ see~\cite{ParZamb}).

The main result of this paper constitutes a refinement of the Morse-Hedlund theorem and may be stated as follows:

\begin{theorem}\label{th:main}
Let $x$ be an infinite word over a finite alphabet. The following are equivalent:
\begin{enumerate}
    \item $x$ is aperiodic;
     \item  $\liminf\limits_{n\rightarrow+\infty} \op_x(n) = +\infty$;
    \item $\limsup\limits_{n\rightarrow+\infty}\cl_x(n)= +\infty$.
\end{enumerate}
\end{theorem}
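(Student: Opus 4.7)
The implications $(2) \Rightarrow (1)$ and $(3) \Rightarrow (1)$ follow by contraposition from the Morse--Hedlund theorem: if $x$ is ultimately periodic, then $p_x$ is bounded, and since $\op_x(n) + \cl_x(n) = p_x(n)$, both $\op_x$ and $\cl_x$ are bounded, so $\liminf \op_x(n) < +\infty$ and $\limsup \cl_x(n) < +\infty$. The substantive content lies in $(1) \Rightarrow (2)$ and $(1) \Rightarrow (3)$.

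For $(1) \Rightarrow (3)$, fix $K$; the goal is to produce arbitrarily large $n$ with $\cl_x(n) \geq K$. The starting point is that every complete first return $rv$ to a factor $v$ of $x$ is closed by definition, and that aperiodicity forces any recurrent factor $v$ of $x$ to admit at least two distinct first returns — otherwise the unique return word would be a period of the tail of $x$, contradicting aperiodicity. The plan is to pick a factor $v$ which occurs at least $K+1$ times in some long prefix of $x$ (such $v$ exist by aperiodicity and a pigeonhole argument on prefixes of $x$), obtaining at least $K$ distinct first return words $r_1,\dots,r_K$ and hence $K$ distinct closed factors $r_iv$. These complete returns have possibly distinct lengths $|v| + |r_i|$, so I would finish with a further pigeonhole: by varying $v$ over a sufficiently rich family of factors of $x$ and tracking the possible length values $|v| + |r_i|$, at least one common length $n$ must accumulate $K$ distinct closed factors.

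For $(1) \Rightarrow (2)$ I would argue by contraposition. Suppose for some $K$ there is an infinite set $S \subseteq \nats$ with $\op_x(n) \leq K$ for all $n \in S$; then $\cl_x(n) \geq p_x(n) - K \geq n + 1 - K$ on $S$, so at each such length all but at most $K$ factors of length $n$ are closed, hence have a non-trivial border and a period strictly less than $n$. To upgrade this to ultimate periodicity of $x$, I would analyse the Rauzy graph $G_n(x)$ for $n \in S$ together with the right-special and bispecial factors: the profusion of closed length-$n$ factors forces tight constraints on how length-$(n-1)$ vertices extend, and a Fine--Wilf style argument should show that the periods of the various closed length-$n$ factors are mutually compatible, yielding a single period of $x$ and the desired contradiction.

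The main obstacle I anticipate is the last step of the $(1)\Rightarrow(2)$ argument: translating the \emph{local} condition ``most length-$n$ factors possess a border'' into the \emph{global} condition ``$x$ is ultimately periodic''. A priori, the borders/periods of different closed factors of length $n$ may be completely unrelated, so aligning them into one global period of $x$ is delicate; I expect one must exploit that consecutive length-$n$ factors of $x$ share an overlap of length $n-1$, so their border data are highly correlated rather than independent. For the $(1)\Rightarrow(3)$ direction the obstacle is milder but still concrete, namely arranging that the return-word contributions accumulate at a single length instead of spreading out over an interval of lengths.
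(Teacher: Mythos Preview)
Your reductions $(2)\Rightarrow(1)$ and $(3)\Rightarrow(1)$ via Morse--Hedlund match the paper. The two substantive directions, however, both have real gaps.

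For $(1)\Rightarrow(3)$, the argument breaks at the first step: $K+1$ occurrences of $v$ give $K$ return words \emph{with multiplicity}, not $K$ distinct ones; aperiodicity only guarantees at least two distinct returns to each recurrent factor, not $K$. Even granting $K$ distinct complete returns, your pigeonhole over ``a sufficiently rich family'' of $v$ is not an argument --- nothing controls how the return lengths distribute as $v$ varies. The paper proceeds by the contrapositive and along a completely different route: assuming $\cl_x$ is bounded (even just on a syndetic set), it shows (i) every periodic point in $\Omega(x)$ has period below a fixed bound, (ii) one can build $y\in\Omega(x)$ avoiding all the corresponding powers, and (iii) any uniformly recurrent $z\in\Omega(y)$ is periodic --- contradicting (ii). The one place where the paper does manufacture many closed factors of a \emph{single} length is inside one complete first return $w$ to a word $rus$ whose left and right extensions are forced: sliding a window of fixed length across $w$ produces $k+1$ closed factors, each with a border containing $u$, and $u$ cannot recur internally in $w$. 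That sliding trick, not a pigeonhole over varying $v$, is what resolves the length-alignment obstacle you flagged.

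For $(1)\Rightarrow(2)$, your instinct to use the Rauzy graph and the length-$(n-1)$ overlap of consecutive windows is correct and is what the paper does, but ``a Fine--Wilf style argument should align the periods'' is precisely where the work is, and your outline supplies no mechanism. The paper's engine consists of two structural facts you do not mention: (a) in the Rauzy graph of order $n$, every vertex has at most one closed successor and at most one closed predecessor; and (b) the frontier lengths of closed vertices along a path can change by at most the number of open vertices on that path. From (a) and (b), $\op_x(N)\le k$ forces all closed length-$N$ factors to have frontiers of nearly equal and (for large $N$) long length; one then takes a right-special factor, follows both outgoing branches until each reaches a closed vertex (within $k$ steps, by (a)), and gets two long frontiers with a long common prefix and nearly equal lengths --- forcing a high power of a short word, which contradicts a separately proved power-freeness lemma. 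Without (a) and (b) I do not see how to convert ``most length-$n$ factors have some border'' into a single global period.
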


In particular, both complexity functions are unbounded if $x$ is aperiodic. Actually we prove something slightly more general in which condition 2. is replaced by $\limsup\limits_{n\in S}\cl_x(n)= +\infty$, where $S$ is any syndetic subset of $\nats.$ Of course, that conditions $2.$ and $3.$ each imply $1.$, is an immediate consequence of the Morse and Hedlund theorem. 
Since the limit inferior of the closed complexity of an aperiodic infinite word may be finite (as in the case of the regular paperfolding word) as it may be infinite (in the case of Sturmian words), we cannot hope to characterise periodicity in terms of $\liminf\limits \cl_x(n).$
Finally, it is necessary to assume the finiteness of the underlying alphabet, otherwise taking $x=1234567\cdots \in \nats^\nats$, we see that $x$ contains no closed factors of length greater than one. 

\medskip
We start with recalling some definitions and establishing some notations both of which will be pertinent in what follows. The implication $1.\Rightarrow 2.$ of Theorem~1 is proven in Section~\ref{sec:o}; 
in Section~\ref{sec:c} we show that words with bounded closed complexity are ultimately periodic, from which follows the implication $1.\Rightarrow 3.$
We finish with some remarks and open questions.

\section{Definitions and notations}

Let $\A$ be a finite non-empty set and $\A^+$ the free semigroup generated by $\A$ under the operation of concatenation of words. The set $\A$ is called the {\it alphabet} with its elements being {\it letters}. 
Given  $w=w_1w_2\cdots w_n \in \A^+,$ with each $w_i\in \A,$ the value $n$ is called the {\it length} of $w$ and is denoted with $|w|.$ We say $w\in \A^+$ is {\it primitive} if $w$ is not an integer power of some shorter word, i.e., if $w$ cannot be written in the form $w=u^n$ for some $u\in \A^+$ and integer $n\geq 2.$  
Given $u,w\in \A^+$ with $|u|<|w|,$ we say $u$ is a {\it border} of $w$ if $u$ is both a prefix and a suffix of $w.$ We say $w\in \A^+$ is {\it closed} if either $w\in \A$ or $w$ admits a border $u$ which occurs precisely twice in $w.$ Otherwise $w$ is said to be {\it open}. Thus $w\in \A^+$ is closed if either $w\in \A$ or if its longest border $u$ occurs exactly twice in $w,$ i.e., $u$ has no internal occurrences in $w.$ The longest border of a closed word is called \textit{frontier}. 

Let  $\A^\nats$ denote the set of all right infinite words $x=x_1x_2x_3\cdots$ with each $x_i\in \A.$ We endow $\A^\nats$ with the product topology of the discrete topology on $\A.$  For $x\in \A^\nats,$ we let $\Omega (x)$ denote the closure in $\A^\nats$ of the set $\{x_nx_{n+1}x_{n+2}\cdots \,|\, n\in \nats\}.$ $\Omega(x)$ is called the {\it shift orbit closure} of $x.$ 
An infinite word $x\in \A^\nats$ is said to be {\it purely periodic} if $x=u^\nats $ for some $u\in \A^+$ and  \textit{ultimately periodic} if $x=uv^{\nats }$ for some $u,v \in \A^+.$ We say $x$ is {\it aperiodic} if $x$ is not ultimately periodic. 

Given $x=x_1x_2x_3\cdots \in \A^+ \cup \A^\nats$ and $w\in \A^+,$ let $x| _w=\{m\in \nats\,|\, w=x_mx_{m+1}\cdots x_{m+|w|-1}\},$ i.e., $x| _w$ denotes the set of all occurrences of $w$ in $x.$ We say $w$ is a {\it factor} of $x$ if $x| _w\neq \emptyset.$ We say $w$ is a {\it recurrent factor} of $x$ if $x| _w$ is infinite. 
We let $\Fact(x)$ (resp.\ $\Factr(x))$ denote the set of factors (resp.\ recurrent factors) of $x.$ Thus $y\in \Omega(x)$ if and only if $\Fact(y)\subseteq \Fact(x).$ We say $x\in \A^\nats$ is recurrent if $\Fact(x)=\Factr(x).$ We say $x\in \A^\nats$ is {\it uniformly recurrent} if $x| _w$ is syndetic for every $w\in \Fact(x).$ Recall that a subset $S$ of $\nats$ is {\it syndetic} if there exists a positive integer $d$ such that $ S\cap \llbracket n,n+d\rrbracket \neq \emptyset$ for every $n\in \nats.$ It is a well known fact that $\Omega(x)$ contains at least one uniformly recurrent element. 

A factor $w$ of a finite or infinite word $x$ is called \textit{right special} (resp.\ \textit{left special}) in $x$ if there exist distinct letters $a$ and $b$ such that $wa$, $wb$ (resp.\ $aw$, $bw$) are factors of $x.$ For $x\in \A^\nats$ and $n\in \nats,$ let $\cl_x(n)$ (resp.\ $\op_x(n))$ denote the number of closed (resp.\ open) factors of $x$ of  length $n.$

\section{Words with finite $\liminf (\op_x(n))_{n\in \mathbb{N}}$ are ultimately periodic.}\label{sec:o}

In this section we make use of the properties of Rauzy graphs stated in Propositions~1 and~2.

For $x\in \A^\nats$ and $n\in \nats,$ the \textit{Rauzy graph} of order $n$ of $x\in \A^\nats$ is the directed graph whose set of vertices (resp.\ edges) consists of all factors of $x$ of length $n$ (resp.\ $n+1).$ There is a directed edge from $u$ to $v$ labeled $w$ if $u$ is a prefix of $w$ and $v$ a suffix of $w.$
A \textit{path} of length $k$ in a graph is an alternating sequence of vertices and edges $\ v_1,e_1,v_2,e_2,v_3,\dots,v_{k},e_{k},v_{k+1}\ $ which begins and ends with a vertex and where each $e_i$ is a directed edge from $v_i$ to $v_{i+1}.$ 
The \textit{distance} between two vertices in a Rauzy graph is the length of the shortest path between them.

The next two propositions as well as Corollary~\ref{rauzy2} also hold in case $\A$ is infinite. 
 
\begin{proposition}\label{l3}
Let $x\in \A^\nats$  and $N \in \mathbb{N}$. Let $w_1$ and $w_2$ be two factors of $x$, such that there is a path of length $i$ from $w_1$ to $w_2$ in the Rauzy graph of order $N$ of $x$. Suppose $w_1$ and $w_2$ are closed with frontiers $u_1$ and $u_2$ respectively. Then $||u_1|-|u_2||<i$.\\
In particular, if $i=1$ the frontiers are of the same length: $|u_1|=|u_2|$.

\end{proposition}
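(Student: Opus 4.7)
The plan is to reformulate the claim in terms of shortest periods and then exhibit a forbidden internal occurrence of one of the frontiers, contradicting closedness. Recall that for a closed word $w$ of length $N$ with frontier $u$ of length $k$, the standard correspondence between longest borders and shortest periods gives that $w$ has shortest period $p := N - k$, and the closedness condition says precisely that $u$ occurs in $w$ at exactly two positions, namely $1$ and $p + 1$. Writing $p_j := N - |u_j|$ for the shortest period of $w_j$, the conclusion of the proposition is equivalent to $|p_1 - p_2| < i$.

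Next I set up notation: let $W$ be the factor of $x$ of length $N + i$ witnessing the path, so that $w_1$ is its length-$N$ prefix and $w_2$ its length-$N$ suffix; they share the common length-$(N-i)$ factor $V$ obtained by deleting the leading $i$ letters of $w_1$ (equivalently, the trailing $i$ letters of $w_2$). Since $V$ is a factor of both $w_1$ and $w_2$, it inherits both periods $p_1$ and $p_2$. Supposing for contradiction that $p_2 - p_1 \geq i$, I aim to exhibit an internal occurrence of the frontier $u_2$ inside $w_2$. This inequality is precisely what is needed to guarantee $p_1 + |u_2| \leq |V|$, i.e., that shifting the prefix $u_2$ of $V$ to the right by $p_1$ still lands inside $V$; by the $p_1$-periodicity of $V$ the shifted factor coincides with $u_2$, producing a new occurrence of $u_2$ in $w_2$ at position $p_1 + 1$. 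Since $1 \leq p_1 < p_2$, this position lies strictly between $1$ and $p_2 + 1$, hence is an internal occurrence of the frontier of $w_2$, contradicting the closedness of $w_2$.

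The mirror case $p_1 - p_2 \geq i$ is handled identically by swapping the roles of the two words: one uses the $p_2$-periodicity of $V$ to translate the suffix occurrence of $u_1$ inside $W$ (which lies inside $V$ since the assumption forces $p_1 > i$) backward by $p_2$, producing an internal occurrence of $u_1$ inside $w_1$. The main place where care is required is the index bookkeeping: one must check under the contradiction hypothesis that the shifted position is a valid occurrence inside $V$ (via the length inequality $|p_1 - p_2| \geq i$) and that it lies strictly between the prefix and the suffix positions of the frontier of $w_j$ (via $1 \leq \min(p_1,p_2) < \max(p_1,p_2)$). Specialising to $i = 1$ yields $|u_1| = |u_2|$ immediately, giving the ``in particular'' statement.
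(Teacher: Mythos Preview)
Your proof is correct and, at its core, implements the same idea as the paper: exploit the length-$(N-i)$ overlap between $w_1$ and $w_2$ to manufacture a forbidden third occurrence of one of the frontiers. The paper argues geometrically from Figure~1: if $i+|u_2|-|u_1|\leq 0$ then the prefix copy of $u_2$ lies inside the prefix copy of $u_1$, and the suffix copy of $u_1$ then carries an extra copy of $u_2$ into the interior of $w_2$. You instead translate to shortest periods $p_j=N-|u_j|$, observe that the overlap $V$ inherits both periods, and shift the prefix $u_2$ of $V$ by $p_1$ to land at position $p_1+1$ in $w_2$. A quick check shows this is the \emph{same} internal occurrence the paper locates. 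So the two arguments differ only in packaging: the paper's version is a picture proof with the index bookkeeping left to the figure, while your period formulation makes the inequalities $p_1+|u_2|\leq |V|$ and $1<p_1+1<p_2+1$ explicit and verifies them cleanly from the contradiction hypothesis.
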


\begin{proof}
The situation is as illustrated on Figure~\ref{fig:prop8}. 


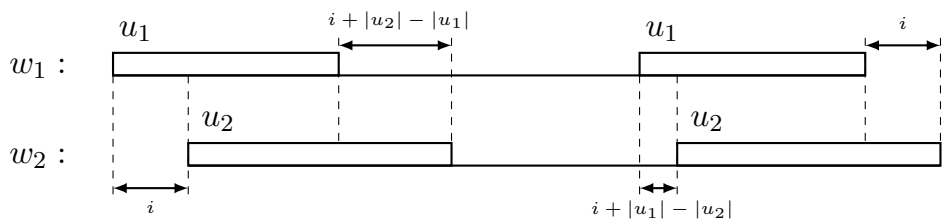
\begin{figure}[H]
\centering
\begin{tikzpicture}[thick,scale=1, every node/.style={scale=1.15}, >=latex] 
\centering
\pgfmathsetmacro\l{10} 
\pgfmathsetmacro\i{1}
\pgfmathsetmacro\u{3} 
\pgfmathsetmacro\v{3.5} 
\pgfmathsetmacro\h{1.2} 

\draw (0,0) -- (\l,0);
\draw (\i, -\h) -- (\l+\i,-\h);
\node at (-1,0.1) {$w_1:$};
\node at (-1,-\h+0.1) {$w_2:$};

\draw (0,0) rectangle (\u,0.3);
\node[above] at (0.3,0.3) {$u_1$};
\draw (\l-\u,0) rectangle (\l,0.3);
\node[above] at (\l-\u+0.3,0.3) {$u_1$};
\draw (\i,-\h) rectangle (\i+\v,0.3-\h);
\node[above] at (\i+0.4,0.3-\h) {$u_2$};
\draw (\l+\i-\v,-\h) rectangle (\l+\i,0.3-\h);
\node[above] at (\l+\i-\v+0.4,0.3-\h) {$u_2$};

\foreach \x in {0,\i,\l-\u,\l-\v+\i} 
   \draw[dashed, thin] (\x,0) -- (\x ,-\h-0.4);
\draw[<->] (0, -\h-0.3)--(\i, -\h-0.3);
\node[below] at (\i/2,-\h-0.3) {\tiny{$i$}};
\draw[<->] (\l-\u, -\h-0.3)--(\l-\v+\i, -\h-0.3);
\node[below] at (\l-\u+0.3,-\h-0.3) {\tiny{$i+|u_1|-|u_2|$}};
 
\foreach \x in {\u,\i+\v,\l,\l+\i}
    \draw[dashed, thin] (\x,0.5) -- (\x ,-\h+0.3     );  
\draw[<->] (\u, 0.4)--(\v+\i, 0.4);
\node[above] at (\u+0.8,0.45) {\tiny{$i+|u_2|-|u_1|$}};
\draw[<->] (\l, 0.4)--(\l+\i, 0.4);
\node[above] at (\l+\i/2,0.45) {\tiny{$i$}};

\end{tikzpicture}
\caption{Factors $w_1$ and $w_2$.}
    \label{fig:prop8}
\end{figure}

Since $w_2$ is closed, $u_2$ cannot be a factor of $u_1$. Hence $i+|u_2| -|u_1| > 0$. Since $w_1$ is closed, $u_1$ cannot be a factor of $u_2$. Hence $i+|u_1|-|u_2| > 0$. The result follows.
\end{proof}

\begin{corollary}\label{rauzy2}
Let $w_1,w_2,u_1,u_2$ be as in Proposition \ref{l3}. If there exists a path between $w_1$ and $w_2$ in the Rauzy graph consisting of only closed factors, then $|u_1|=|u_2|$. Thus, if there exists a path between $w_1$ and $w_2$ with $n$ distinct open factors, then $||u_1|-|u_2||\leq n$.
\end{corollary}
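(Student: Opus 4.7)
The plan is to iterate Proposition~\ref{l3} along the given path from $w_1$ to $w_2$ and combine the resulting inequalities by a triangle inequality. For the first assertion, write the path as $w_1=v_0,v_1,\dots,v_k=w_2$ with every $v_i$ closed. Each consecutive pair $v_i,v_{i+1}$ is joined by an edge of the Rauzy graph, i.e.\ by a path of length $1$. Applying Proposition~\ref{l3} to the closed factors $v_i$ and $v_{i+1}$ forces the lengths of their frontiers to differ by strictly less than $1$, hence to coincide. Chaining these equalities from $w_1$ to $w_2$ yields $|u_1|=|u_2|$.

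For the second assertion, write the path again as $w_1=v_0,v_1,\dots,v_k=w_2$ and single out its closed vertices $v_{i_0},v_{i_1},\dots,v_{i_s}$, with $i_0=0$ and $i_s=k$. Between two consecutive closed vertices $v_{i_j}$ and $v_{i_{j+1}}$ the walk has length $i_{j+1}-i_j$ and traverses exactly $d_j:=i_{j+1}-i_j-1$ open factors. Applying Proposition~\ref{l3} to $v_{i_j}$ and $v_{i_{j+1}}$ along this sub-path of length $d_j+1$ gives $\bigl||u_{v_{i_j}}|-|u_{v_{i_{j+1}}}|\bigr|\le d_j$. The triangle inequality then yields $\bigl||u_1|-|u_2|\bigr|\le\sum_j d_j$, i.e.\ the length difference of the frontiers is at most the total number of open factors traversed by the walk. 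When those open factors are distinct along the walk, this total equals $n$, which is the announced bound.

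I expect no serious obstacle, since both parts reduce to a book-keeping argument on top of Proposition~\ref{l3}. The only delicate point is that a ``path'' in the sense introduced earlier may revisit a vertex, so a priori the number of open factors crossed could exceed the number $n$ of distinct open factors. This is harmless: whenever an open vertex $v$ appears at two positions $i<j$ of the walk, one may replace the sub-walk $v_i,\dots,v_j$ by the single vertex $v$ followed by the edge $v\to v_{j+1}$, which is legal because $v=v_j$. Iterating produces a walk that visits each open vertex at most once and uses only vertices already present in the original walk, so in particular at most $n$ open factors are traversed; the inequality above then gives the desired bound.
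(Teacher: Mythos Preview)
Your argument is correct and is exactly the natural iteration of Proposition~\ref{l3} that the paper has in mind; the paper states the corollary without an explicit proof, treating it as immediate. Your careful treatment of the ``distinct'' qualifier via the loop-erasure step (shortcutting the walk at repeated open vertices, which is legitimate since the endpoints are closed and hence never removed) is the only point that requires comment, and you handle it cleanly.
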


\begin{proposition}\label{5}
Let $x\in \A^\nats.$  For every $j>1$, every vertex in the Rauzy graph of order $j$ of $x$ has at most one closed predecessor and one closed successor.
\end{proposition}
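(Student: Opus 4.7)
The plan is to proceed by contradiction, handling the predecessor case in detail and reducing the successor case to it. Fix $j > 1$ and a vertex $v = v_1 v_2 \cdots v_j$ in the Rauzy graph of order $j$. Suppose, toward a contradiction, that $v$ admits two distinct closed predecessors $u_1 = a_1 v_1 \cdots v_{j-1}$ and $u_2 = a_2 v_1 \cdots v_{j-1}$ with $a_1 \ne a_2$. Denote by $b_i$ the frontier of $u_i$ and set $k_i = |b_i|$; since $|u_i| = j \geq 2$ each $k_i$ lies in $\{1, \ldots, j-1\}$.

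First I would extract what closedness of $u_i$ says about the letters of $v$. Reading $b_i$ once as a prefix of $u_i$ and once as a suffix of $u_i$ and matching letter by letter gives $a_i = v_{j-k_i}$ together with the periodicity $v_m = v_{j-k_i+m}$ for $1 \le m \le k_i - 1$. The hypothesis $a_1 \ne a_2$ then forces $k_1 \ne k_2$, so without loss of generality $k_1 < k_2$.

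The crux is to exhibit a third occurrence of $b_1$ inside $u_1$. Using the periodicity coming from $u_2$, the length-$k_1$ factor of $u_1$ that begins at position $k_2 - k_1 + 1$ should rewrite as $v_{j-k_1} \cdots v_{j-1}$, which is exactly $b_1$. The inequalities $k_1 < k_2 < j$ guarantee that $k_2 - k_1 + 1$ lies strictly between the prefix occurrence at position $1$ and the suffix occurrence at position $j - k_1 + 1$, yielding three pairwise distinct occurrences of $b_1$ in $u_1$. This contradicts the fact that $b_1$, being the frontier of the closed word $u_1$, occurs exactly twice in $u_1$.

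For the successor statement, an analogous computation works: writing two distinct closed successors of $v$ as $s_i = v_2 \cdots v_j b_i$ with frontiers of lengths $k_1 < k_2$, the periodicity imposed by $s_2$'s closedness produces a third occurrence of the shorter frontier in $s_1$, this time at position $j - k_2 + 1$. One may also simply invoke the symmetry that a finite word is closed if and only if its reverse is closed, and deduce the successor case from the predecessor case.

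The only genuinely subtle step is guessing the correct internal position at which the shorter frontier reappears; everything else is routine bookkeeping with indices. The underlying intuition is that the \emph{excess periodicity} forced on $u_2$ by its longer frontier $b_2$ propagates into $u_1$ and manufactures the forbidden third occurrence of $b_1$.
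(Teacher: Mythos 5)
Your proof is correct and follows essentially the same route as the paper: both arguments show the two frontiers must have different lengths (because they are determined suffixes of the common part starting with distinct letters), take the shorter one, and exhibit a forbidden third occurrence of it ending where the prefix occurrence of the longer frontier ends. The paper phrases this via ``the shorter frontier is a proper suffix of the longer one,'' while you recover the same internal occurrence at position $k_2-k_1+1$ by explicit index bookkeeping with the periodicity relation, but the content is identical.
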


\begin{proof}
%

Let $w$ be a word of length $j-1$ and consider $bw$ and $cw$ to be both closed with $b, c\in\A$, $b \neq c$. 
Then, labelling $u$ the frontier of $bw$ and $v$ the frontier of $cw$ one has $|u|\neq |v|$ since $u$ and $v$ are both suffixes of $w$ but do not start with the same letter. 
Suppose, without loss of generality, that $|u| < |v|$. This means $u$ is a proper suffix of $v$, hence it appears in $w$ as a proper suffix of the first occurrence of $v$ in $cw$. 
This leads to at least three occurrences of $u$ in $bw$, which is then not closed. Symmetrically, there is at most one letter $b'\in \A$ such that $wb'$ is closed.

\end{proof}

\begin{theorem}\label{open}
Let $x$ be an infinite word over a finite alphabet $\mathbb{A}$. Let $k$ be a positive integer such that\ \  $\liminf\limits_{n\in\nats} \op_x(n) = k$. Then $x$ is ultimately periodic.
\end{theorem}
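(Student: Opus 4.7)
Plan:

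I would prove the theorem by contradiction, assuming $x$ is aperiodic so by the Morse--Hedlund theorem $p_x(n)\ge n+1$ for all $n$, and fixing an infinite set $S\subseteq \nats$ on which $\op_x(n)\le k$. For $n\in S$ I study the Rauzy graph $G_n$ and its subgraph $H_n$ on the closed vertices. Proposition~\ref{5} gives that $H_n$ has in- and out-degree at most one, so is a disjoint union of simple paths and simple cycles; Corollary~\ref{rauzy2} gives that the frontier length is constant on each connected component, and a short computation using primitivity of the associated cyclic pattern shows that a cycle component with frontier length $\ell$ consists of exactly $n-\ell$ vertices (the length-$n$ factors of $c^{\nats}$ for some primitive $c$ of length $n-\ell$), while a path component with frontier length $\ell$ has at most $n-\ell$ vertices.

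The key structural step is the observation that any \emph{isolated} cycle $C$ of $H_n$---one whose vertices all have full in- and out-degree $1$ in $G_n$---would already force $x$ to be ultimately periodic. Indeed, such a $C$ is a full connected component of $G_n$; since every vertex of $C$ is a length-$n$ factor of $x$, the walk $W_i=x_ix_{i+1}\cdots x_{i+n-1}$ through $G_n$ must visit $C$ at some $i$, and is then trapped and traces a periodic sequence thereafter. Under aperiodicity therefore no cycle of $H_n$ is isolated, so every cycle contains a vertex adjacent in $G_n$ to an open vertex (by Proposition~\ref{5} the ``extra'' neighbour cannot be closed). Since each of the at most $k$ open vertices has degree $\le 2|\A|$ in $G_n$, there are at most $O(|\A|k)$ such ``escape'' edges, so $H_n$ has $O(|\A|k)$ cycle components; the analogous argument on path endpoints gives $O(|\A|k)$ path components. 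Using Proposition~\ref{l3} via Corollary~\ref{rauzy2} on paths through open vertices, the frontier lengths of the components of $H_n$ additionally cluster in a common interval of length at most $k$.

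To close the argument I would combine these bounds. The naive counting $\cl_x(n)=O(|\A|kn)$ is not by itself a contradiction since aperiodic words may have linear factor complexity; however, combining the clustering of frontier lengths with the bound $V_c\ge n+1-k$ forces the minimum frontier length $L$ of $H_n$ to grow as $L= n-\Theta(n/(|\A|k))$, so that the primitive words $c$ appearing as ``periods'' of the cycles of $H_n$ have lengths $m=n-\ell$ both growing with $n$ and bounded in number at each level. A pigeonhole and compactness argument across $S$ (after passing to a uniformly recurrent $y\in\Omega(x)$, which satisfies $\op_y\le\op_x$ and the same hypotheses and whose Rauzy graph is strongly connected so that the frontier clustering is immediate) yields a periodic element $c^{\nats}\in\Omega(y)\subseteq\Omega(x)$; minimality of $\Omega(y)$ then gives $\Omega(y)=\Omega(c^{\nats})$. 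The main obstacle is this last closing step: making the pigeonhole and compactness rigorous with $|c|\to\infty$, and then upgrading ``a periodic element exists in $\Omega(x)$'' into ``$x$ itself is ultimately periodic'' (which is not automatic, but follows by returning to $x$ and noting that the cycle structure inside $G_n$ that yielded $c^{\nats}\in\Omega(x)$ must eventually produce an isolated cycle, forcing the walk of $x$ to be trapped and concluding the proof).
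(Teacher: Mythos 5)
Your structural analysis of the Rauzy graph is sound as far as it goes: Proposition~\ref{5} does make the subgraph $H_n$ on closed vertices a disjoint union of simple paths and cycles, the escape-edge count does bound the number of components by $O(|\A|k)$, and Corollary~\ref{rauzy2} does cluster the frontier lengths in a window of width $k$ (this is exactly Lemma~\ref{4}). The isolated-cycle observation is also correct. But the proof does not close, and you say so yourself: the ``pigeonhole and compactness argument'' that is supposed to produce $c^{\nats}\in\Omega(y)$ is not just unrigorous, it fails. A cycle of closed vertices of $G_n$ with frontier length $\ell$ certifies only that $x$ contains a word of length $n$ with period $n-\ell$, i.e.\ a power of a word of length $n-\ell$ with exponent roughly $n/(n-\ell)=O(|\A|k)$ --- a \emph{bounded} exponent, with the root length $n-\ell\to\infty$. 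Arbitrarily long factors occurring with a fixed bounded exponent do not yield a periodic point in the orbit closure (the Thue--Morse word contains squares of arbitrarily long words and its subshift has no periodic points), so no compactness argument along $S$ can extract $c^{\nats}$ from this data alone. The second half of your closing step is also unjustified: even if $c^{\nats}\in\Omega(x)$, so that $c^j\in\Fact(x)$ for all $j$, nothing forces the $c$-cycle in $G_n$ to ``eventually become isolated''; a word can contain every power of $c$ while every $G_n$ retains an escape edge off that cycle, so trapping of the walk of $x$ does not follow. Passing to a uniformly recurrent $y$ creates the same problem in reverse: proving $y$ periodic says something about $\Omega(x)$, not about $x$.

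For contrast, the paper's argument is local and avoids $\Omega(x)$ entirely. It first shows (Lemma~\ref{l1}) that for suitable $N\in S$ no high power $u^N$ of a short word ($|u|<2k$) can occur in an aperiodic $x$, because the boundary of such a power generates $k+1$ distinct open factors of length $N$. Then it takes a right special factor $w$ of length $N$, uses Proposition~\ref{5} to find, within distance $\le k$ in $G_N$, two closed extensions $w_a,w_b$ branching on distinct letters $a\ne b$; their frontiers are long (Lemma~\ref{l6}, a pigeonhole on short frontiers), of lengths differing by at most $k$ (Lemma~\ref{4}), yet necessarily of \emph{different} lengths since they end differently while $w_a,w_b$ share a long prefix. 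Comparing the two frontiers then produces a long word overlapping itself with shift $\le k$, hence a high power of a word of length $<2k$, contradicting Lemma~\ref{l1}. If you want to salvage your approach, the missing ingredient is precisely something playing the role of this final ``two distinct closed extensions of a right special factor'' step, which converts the graph structure into a concrete forbidden repetition rather than into a limit object.
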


In order to prove Theorem \ref{open}, we start by proving some lemmas, where $k$, $x$ and $\A$ are defined as in the theorem statement. 

\begin{lemma}\label{l1}
Suppose that $x$ is aperiodic. Let $N> 11k+2$ be such that $\op_x(N)=k$. Then $u^N \notin \Fact(x)$ for any choice of $u$ with $|u|<2k.$
\end{lemma}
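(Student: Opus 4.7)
The plan is to assume, for contradiction, that $u^N \in \Fact(x)$ with $|u| < 2k$, and derive a contradiction by examining the Rauzy graph $G_N$ of order $N$. After replacing $u$ by its primitive root (which keeps $|u| < 2k$ and preserves $u^N \in \Fact(x)$), I may assume $u$ is primitive, and write $p := |u|$.

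First I would show that each of the $p$ distinct length-$N$ factors of $u^N$ is closed, with frontier of length exactly $N - p$. Fix such a factor $w$; since $w$ has period $p$, its length-$(N-p)$ prefix $v$ equals its length-$(N-p)$ suffix, so $v$ is a border. If $v$ occurred at some interior position $j \in \{2, \ldots, p\}$ of $w$, then $v$ would admit both periods $p$ and $j - 1 < p$. Since $|v| = N - p > 9k > 2p$ easily satisfies the Fine--Wilf hypothesis, $v$ would have period $d := \gcd(p, j-1) < p$. As $|v| \ge p$, the length-$p$ prefix of $v$, which is $u$, inherits this period $d$, contradicting primitivity. Hence $v$ occurs exactly twice in $w$, and $w$ is closed with frontier of length $N - p$.

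The $p$ closed factors form a directed cycle $C$ of length $p$ in $G_N$. By Proposition \ref{5}, every vertex of $G_N$ has at most one closed predecessor and one closed successor, so $C$ is a complete connected component of the closed subgraph of $G_N$. Aperiodicity and Morse--Hedlund give $p_x(N) \ge N + 1 > 11k + 2$, hence $\cl_x(N) \ge 10k + 3$, so there are at least $8k + 3$ closed length-$N$ factors outside $C$. For any closed $w' \notin C$, both $u^N$ and $w'$ occur in $x$, so some directed path in $G_N$ connects a vertex of $C$ to $w'$ in one direction or the other; this path uses at most $k$ distinct open factors (since there are only $k$ in total), and Corollary \ref{rauzy2} then gives $||u_{w'}| - (N - p)| \le k$, forcing the smallest period of $w'$ to lie in $[p - k, p + k] \subseteq [1, 3k - 1]$.

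The main obstacle is extracting a contradiction from this severe period restriction. My intended finish is to decompose the closed subgraph of $G_N$ into disjoint directed paths and cycles (using Proposition \ref{5}'s in-/out-degree bounds); by Corollary \ref{rauzy2} each such component has all its vertices at a common frontier length, hence a common period $p' < 3k$, so each component has size at most $p'$. Aperiodicity forces every cycle component to have at least one ``exit'' open neighbour in $G_N$, and every path component has open neighbours at its endpoints; by Proposition \ref{5} these open neighbours are pairwise distinct across components, which should bound the total number of components by $O(k)$. Fine-tuning the resulting count of closed factors against the sharp threshold $N > 11k + 2$ to contradict $\cl_x(N) \ge 10k + 3$ is the most delicate step.
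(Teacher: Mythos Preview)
Your global structure is sound through the point where you bound the frontier of every closed factor of length $N$: the factors of $u^N$ are closed with frontier length $N-p$, and via Corollary~\ref{rauzy2} every other closed factor has frontier length in $[N-p-k,\,N-p+k]$, hence period in $[p-k,\,p+k]\subset[1,3k-1]$. Your decomposition of the closed subgraph into paths and cycles is also correct, as is the bound of at most $p'$ vertices per component and the ``open exit'' argument giving at most $k$ components.

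The gap is exactly where you flag it: the final count cannot be ``fine-tuned'' to match the linear threshold. Your argument yields
\[
\cl_x(N)\ \le\ (\text{number of components})\cdot(\text{max component size})\ \le\ k\cdot(3k-1),
\]
which is quadratic in $k$, whereas the hypothesis $N>11k+2$ only gives $\cl_x(N)\ge p_x(N)-k>10k+2$. For $k\ge 4$ these do not conflict, and no sharpening of your component-size or component-count bounds will bring the product down to $O(k)$: there really can be roughly $k$ components, each of size roughly $p+k$, so the quadratic bound is the honest output of this method. Your approach would prove the lemma under the stronger hypothesis $N>3k^2+k$ (say), but not under the stated one.

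The paper avoids this entirely by arguing on the open side rather than the closed side. Since $x$ is aperiodic, the $u$-periodic block must break: after rotating $u$ one finds a factor $u^{(N-1)/|u|}a$ with $a$ not the expected letter. This factor is open, because any border of length at most $N-1-|u|$ would reoccur at a shift of $|u|$ by the periodicity of the prefix, and a border longer than $N-1-|u|>3|u|+1$ forces an internal occurrence of $u$ in $uu$, contradicting primitivity. The same reasoning applies to each of its next $k$ successors in the Rauzy graph, and these $k+1$ factors are pairwise distinct (again by primitivity of $u$). This directly produces $k+1$ open factors of length $N$, contradicting $\op_x(N)=k$ without any counting of closed factors.
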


\begin{proof}
Suppose that $u^N\in \Fact(x)$ for some primitive word  $u \in \A^+$ with $|u| \leq 2k.$
Since $x$ is aperiodic, up to considering a cyclic rotation of $u$ there exists $a \in \mathbb{A}$ such that $u^{\frac{N-1}{|u|}}a$ is a factor of $x$ with $u^{\frac{N-1}{|u|}}a\neq u^{\frac{N}{|u|}}$. 
This factor is open: if not, its frontier has length at least $N-1-|u|>3|u|+1$, which implies that $u$ occurs internally in $uu$ contradicting the fact that $u$ is primitive (see Figure~2). 
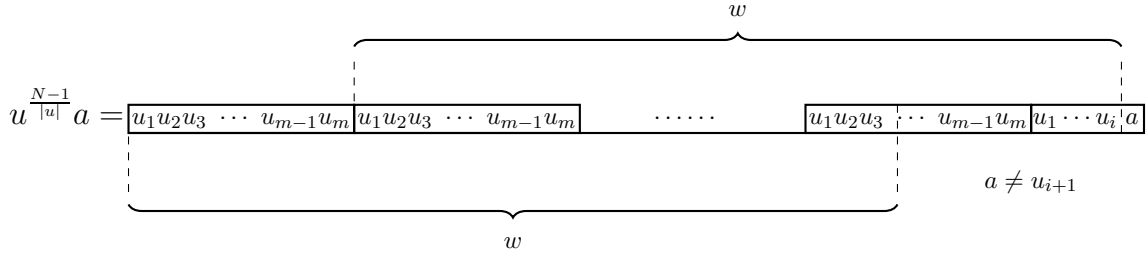
\begin{figure}[H]
\centering
\begin{tikzpicture}[thick,scale=0.75, every node/.style={scale=0.86}]
\centering
\pgfmathsetmacro\l{20}
\pgfmathsetmacro\i{1}
\pgfmathsetmacro\u{4}
\pgfmathsetmacro\v{3.5}

\node[above] at (-1,0) {\Large{$u^{\frac{N-1}{|u|}}a=\ $}};

\draw (0,0) -- (\l-4,0);

\foreach \x in {0,1,3}
    \draw (\x*\u,0) rectangle (\x*\u+\u,0.5);
\draw (4*\u,0) rectangle (4*\u+\u/2,0.5);
\foreach \x in {0,1,3}    
    \node[above] at (\x*\u+\u/2,-0.1) {$u_1u_2u_3\ \cdots\ u_{m-1} u_m$};    
\node[above] at (\l/2-0.1,0) {$\cdots\cdots$};
\node[above] at (4*\u+0.97,-0.07) {$u_1\cdots u_i\ a$};


\node[below] at (\l-4,-0.5) {$a\neq u_{i+1}$};

\draw[dashed, thin] (0,0.5) -- (0,-1.1);
\draw[dashed, thin] (\l-6.37,0.5) -- (\l-6.37,-1.1);
\draw [decorate,decoration={brace,raise=2mm,amplitude=5pt,mirror}] (0,-1) -- (\l-6.37,-1) node[below, pos=.5, yshift=-0.6cm] {$w$};
\draw[dashed, thin] (\u,0) -- (\u,1.4);
\draw[dashed, thin] (\u+\l-6.4,0) -- (\u+\l-6.4,1.4);
\draw [decorate,decoration={brace,raise=4mm,amplitude=5pt}] (\u,1) -- (\u+\l-6.4,1) node[above, pos=.5, yshift=0.8cm] {$w$};
\end{tikzpicture}
\caption{The frontier should be longer than $w$.}
    \label{fig:l3}
\end{figure}

Let us consider, for $j \leq k$, a factor $u_{j+1}\cdots u_{|u|}u^{\frac{N-1}{|u|}-1}ab_1\cdots b_j$, which is a successor of $u^{\frac{N-1}{|u|}}a$ at distance at most $k$ in the Rauzy graph of order $N$ of $x$. 
Again, this factor is open: otherwise the length of its frontier  would be at least $N-1-|u|-j>3|u|+1$, and $u$ would be a factor of $uu$. Besides, those factors are pairwise distinct, since the equality between any two of them would imply that $u$ is an internal factor of $uu$. This produces at least $k+1$  distinct open factors of length $N$, thereby contradicting our initial assumption on $N$.  
\end{proof}

\begin{lemma}\label{4}
Let $j\in \mathbb{N}$ be such that $\op_x(j)=k$. Let $u$ and $v$ be two closed factors of length $j$ whose frontiers are of length $r$ and $p$ respectively. Then $ |p-r|\leq k$. 
\end{lemma}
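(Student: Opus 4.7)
The plan is to reduce the inequality to Corollary~\ref{rauzy2} by exhibiting a directed walk from one of $u$, $v$ to the other in the Rauzy graph $G_j$ of order $j$ of $x$ that visits only $k$ distinct open vertices; Corollary~\ref{rauzy2} will then yield the bound immediately.

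To produce such a walk I would simply read it off from $x$ itself. Since both $u$ and $v$ belong to $\Fact(x)$, fix occurrences $u = x_m x_{m+1}\cdots x_{m+j-1}$ and $v = x_n x_{n+1}\cdots x_{n+j-1}$, and assume without loss of generality that $m \leq n$ (the case $u=v$ is trivial). The finite factor $x_m x_{m+1}\cdots x_{n+j-1}$ of $x$, scanned through a sliding window of length $j$, induces a directed walk $u = w_0, w_1, \ldots, w_{n-m} = v$ in $G_j$, where each $w_{i+1}$ is obtained from $w_i$ by appending the letter $x_{m+i+j}$ and dropping the letter $x_{m+i}$.

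Every open vertex encountered along this walk is, by definition, an open factor of $x$ of length $j$; by hypothesis there are exactly $k$ such factors in $x$, so the walk meets at most $k$ distinct open vertices. Applying Corollary~\ref{rauzy2} with $w_1 = u$, $w_2 = v$ and $n = k$ now yields $|p - r| \leq k$, which is the desired bound.

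There is no real obstacle here: the whole content of the argument is the observation that the word $x$ itself supplies a connecting walk between any two of its factors of length $j$, and that the global assumption $\op_x(j) = k$ automatically caps the number of distinct open vertices that any such walk can traverse. Given Corollary~\ref{rauzy2}, no further combinatorial analysis of the Rauzy graph is required, and no case distinction beyond the trivial $u = v$ case arises.
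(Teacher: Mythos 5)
Your proof is correct and follows essentially the same route as the paper: connect $u$ and $v$ by a walk in the Rauzy graph of order $j$, note that such a walk can meet at most $k$ distinct open vertices because $\op_x(j)=k$, and invoke Corollary~\ref{rauzy2}. The only difference is that you make explicit the construction of the connecting walk by sliding a window along an occurrence interval in $x$, a step the paper leaves implicit.
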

\begin{proof}
Consider the  Rauzy graph of $x$ of order $j.$ By Corollary~\ref{rauzy2}, it is enough to count the number of distinct open factors on a path between $u$ and $v$ to know the bound on $|p-r|$. There can be at most $k$ of them, so $|p-r|\leq k$.

\end{proof}
\begin{lemma}\label{l6}
Suppose $x$ is aperiodic. Let $m \in \mathbb{N}$, $t=|\A|$, and $N\geq k (t^m+m+2)$ such that $\op_x(N)=k$. Then the frontier of any closed factor of length $N$ is longer than $m$.

\end{lemma}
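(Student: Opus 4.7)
I argue by contradiction: assume that some closed factor $w$ of length $N$ has frontier $u$ of length $\ell\leq m$. By Proposition~\ref{5} there is a deterministic walk $w = w_0 \to w_1 \to w_2 \to \cdots$ in the Rauzy graph of order $N$ obtained by iteratively taking the unique closed successor at each step; by Corollary~\ref{rauzy2} every vertex $w_i$ on this walk has frontier of length $\ell$, and I write $u_i$ for the frontier of $w_i$.

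The key structural observation is that if $u_{i_1} = u_{i_2}$ at two walk positions $i_1 < i_2$, then $i_2 - i_1$ is a positive multiple of $N - \ell$. Indeed, when $i_2 - i_1 \leq N - \ell$, the length-$\ell$ prefix $u_{i_2}$ of $w_{i_2}$ coincides with the length-$\ell$ substring of $w_{i_1}$ starting at position $i_2 - i_1 + 1$; since $w_{i_1}$ is closed with frontier $u_{i_1} = u_{i_2}$, and this frontier appears in $w_{i_1}$ only at positions $1$ and $N - \ell + 1$, we must have $i_2 - i_1 = N - \ell$. The general case follows by iterating this special case after noting that $u_{i_1 + (N-\ell)} = u_{i_1}$ because the suffix of $w_{i_1}$ of length $\ell$ is exactly its frontier.

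Consequently, any cycle in the closed subgraph whose vertices have frontier of length $\ell\leq m$ would have length a positive multiple of $N - \ell$, hence at least $N - \ell$; but then within one period the $N - \ell$ consecutive frontiers would all be distinct length-$\ell$ factors of $x$, forcing $N - \ell \leq t^\ell \leq t^m$, which contradicts $N - \ell \geq N - m > t^m$ implied by the hypothesis $N \geq k(t^m + m + 2)$. So no such cycle exists. Moreover, the same pigeonhole argument gives that the forward walk from $w$ terminates in strictly fewer than $N - \ell$ steps with pairwise distinct frontiers, so in at most $t^m - 1$ steps, and similarly for the reverse walk via closed predecessors; the connected component of $w$ in the closed subgraph is thus a path of at most $2 t^m - 1$ vertices.

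To finish, I would combine this component-size bound with a symmetric application of Proposition~\ref{5} (each open factor of length $N$ has at most one closed successor and one closed predecessor, which limits the number of path components of the closed subgraph to at most $k$) together with the Morse--Hedlund bound $\cl_x(N) \geq N + 1 - k$, the hypothesis $N \geq k(t^m + m + 2)$, and Lemma~\ref{4} (which restricts all frontier lengths to $[1, m+k]$ once a short-frontier factor exists) to derive the contradiction. The main obstacle is treating the closed factors whose frontier length lies in $(m, m+k]$: the walk argument does not directly bound the size of those components (the inequality $N - \ell > t^\ell$ can fail in that range), so the counting must be closed either by iterating the walk argument after adjusting parameters, or by extracting a short period in a cycle with $\ell \in (m, m+k]$ and invoking Lemma~\ref{l1}.
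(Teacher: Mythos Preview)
Your core combinatorial observation is correct and is exactly the engine of the paper's argument: along a path of closed factors with common frontier length $\ell$, two vertices carrying the same frontier must lie at distance at least $N-\ell$, since otherwise that frontier would occur internally in one of them. From this you correctly deduce that any closed chain with frontier length $\ell\le m$ contains at most $t^\ell\le t^m$ vertices (because $N-\ell>t^m\ge t^\ell$ rules out the long-chain alternative).

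Where you diverge from the paper is in the global organisation. You start from a hypothetical short-frontier factor, bound its component, and then try to count all components and all closed factors; this is what forces you to confront the components whose frontier length lies in $(m,m+k]$, and that is where your argument stalls. The paper never attempts such a count. It goes straight to a pigeonhole instead: there are at least $N+1-k$ closed factors of length $N$, and by Proposition~\ref{5} they fall into at most $k$ maximal closed chains, so some chain has at least $\frac{N+1-k}{k}>t^m$ vertices. The paper then supposes that \emph{this} long chain has frontier length smaller than $m$ and applies your observation once: among its first $t^m+1$ vertices two share a frontier, at distance $\le t^m$ yet $\ge N-m$, which contradicts $N\ge k(t^m+m+2)$. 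No other components are ever examined, so the $(m,m+k]$ range simply does not come up.

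Your closing worry is not idle, though. Read literally, the paper's argument only shows that the one long chain has frontier length $\ge m$; it does not by itself force \emph{every} closed factor to have frontier length $>m$, and Lemma~\ref{4} would only transport the bound with a loss of $k$. In the downstream application (the proof of Theorem~\ref{open}) this slack is harmless because $m$ is chosen with generous margins, but as a standalone proof of the lemma exactly as stated, both the paper's argument and your attempt leave this last inference implicit.
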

\begin{proof}
Since $x$ is aperiodic, it contains at least $N+1$ different factors of length $N$. 


By Proposition~\ref{5}, there exists a factor such that the shortest path in the Rauzy graph between it and an open factor is of length $\frac{N+1-k}{k}$. By Corollary~\ref{rauzy2}, all closed words on this path have frontiers of the same length.

Let us suppose that this common frontier length is smaller than $m$. There are at most $t^m <\frac{N+1-k}{k}$ such frontiers, so by the pigeon hole principle two of those factors have the same frontier with their distance in the Rauzy graph being less than $t^m+1$. 
Since this frontier cannot occur internally, the distance between those factors is at least $N-m$; hence $N-m<t^m+1$, contradicting the definition of $N$.
\end{proof}
\begin{proof}[Proof of Theorem~\ref{open}]
Let $m=(11k+3)k+2k$. In this case if a word of length at least $m-k$ overlaps itself with distance less than $k$, then it contains a power of exponent $11k+3$ with root shorter than $k$.
Let $N>k (t^m+m+2)$ be such that $\op_x(N)=k$. Consider a right special factor $w=w_1\cdots w_N$ of $x$ (which exists since $x$ is aperiodic). 
By Proposition \ref{5}, there exists $i \leq k$ such that $w_a=w_{i+1}\cdots w_Nay_1\cdots y_{i-1}$ and $w_b=w_{i+1}\cdots w_Nbz_1\cdots z_{i-1}$ with $a \neq b \in \mathbb{A}$ are both closed factors of $x$. See Figure \ref{frontiers1}: at each step before the rightmost one, either on top, bottom, or both paths, there must be an open factor, and each open factor can only appear once.

\begin{figure}[H]
    \centering
    \begin{tikzpicture}[thick,scale=0.6, every node/.style={scale=0.5}, >=latex]
    \usetikzlibrary{shapes}
\tikzstyle{every node}=[draw, rectangle, minimum height=15pt, align=center]
\node (0) at (0,0) {$w_1w_2w_3\cdots w_N$};
\node (1) at (6,1) {$w_2w_3\cdots w_N a$};
\node[above,draw=none] at (6,1.3) {\textit{open}};
\node (-1) at (6,-1) {$w_2w_3\cdots w_N b$};
\node[draw=none] (2) at (11,1) {$\cdots\cdots$};
\node[below,draw=none] at (6,-1.3) {\textit{closed}};
\node[draw=none] (-2) at (11,-1) {$\cdots\cdots$};
\node (i) at (18,1) {$w_{i+1}w_{i+2}\cdots w_N\ a\ y_1y_2\cdots y_{i-1}$};
\node[above,draw=none] at (18,1.3) {\textit{closed}};
\node (-i) at (18,-1) {$w_{i+1}w_{i+2}\cdots w_N\ b\ z_1z_2\cdots z_{i-1}$};
\node[below,draw=none] at (18,-1.3) {\textit{closed}};


\draw[thin,->] (0) -- (1);
\draw[thin,->] (0) -- (-1);
\draw[thin,->] (1) -- (2);
\draw[thin,->] (-1) -- (-2);
\draw[thin,->] (2) -- (i);
\draw[thin,->] (-2) -- (-i);
    \end{tikzpicture}
    
    \caption{The sequence of open and closed factors in the Rauzy graph of order $N$.}
    \label{frontiers1}
\end{figure}
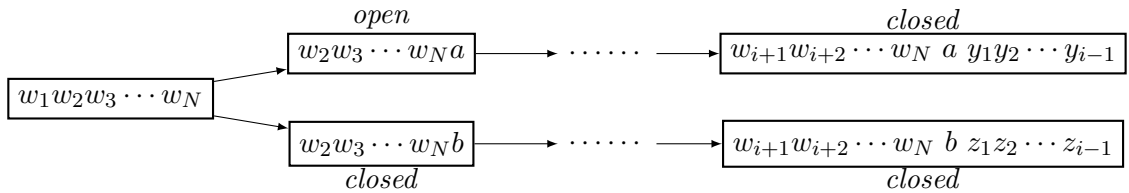

Let us denote the frontiers of $w_a$ and $w_b$ by $u$ and $v$ respectively. For the illustration of the following reasoning see Figure~\ref{fig:overlap}. Applying Lemma \ref{4}, we get $||u|-|v||\leq k$. Since both $u$ and $v$ are longer than $k$ and $a\neq b$, they cannot be equal. This implies $|u|\neq |v|$ since $w_a$ and $w_b$ have a long common prefix.  
Suppose, without loss of generality, that $|u|<|v|$. Lemma \ref{l6} gives $m<|v|$.
Let $u'$ and $v'$ be prefixes of $u$ and $v$ such that $u=u'ay_1\cdots y_{i-1}$ and $v=v'bz_1\cdots z_{i-1}$. Then, $|v'|>m-k$ and $u'$ is a prefix and a suffix of $v'$. Hence $v'$ overlaps itself with a difference less than $k$, what contradicts Lemma~\ref{l1}.


\begin{figure}[H]
\centering
\begin{tikzpicture}[thick,scale=1.1, every node/.style={scale=1.1}] 
\centering
\pgfmathsetmacro\l{10} 
\pgfmathsetmacro\i{2.7}
\pgfmathsetmacro\u{4} 
\pgfmathsetmacro\v{4.7} 
\pgfmathsetmacro\h{2} 

\draw (0,0) -- (\l,0);
\draw (0, -\h) -- (\l,-\h);
\node at (-2.5,0.1) {\footnotesize{$w_{i+1}w_{i+2}\cdots w_N\, a\, y_1y_2\cdots y_{i-1}:$}};
\node at (-2.5,-\h+0.1) {\footnotesize{$w_{i+1}w_{i+2}\cdots w_N\, b\, z_1z_2\cdots z_{i-1}:$}};

\node at (-2.5,-\h/2) {\footnotesize{$a\neq b$}};

\draw (0,0) rectangle (\u,0.4);
\node[above] at (0.4,-0.04) {\footnotesize{$u'$}};
\draw (0,0) rectangle (\u-\v+\i,0.4);
\draw (\l-\u,0) rectangle (\l,0.4);
\node[above] at (\l-\u+0.4,-0.04) {\footnotesize{$u'$}};
\draw (\l-\u,0) rectangle (\l-\v+\i,0.4);
\node[above] at (\l-\v+\i+1,-0.1) {\footnotesize{$a\,y_1y_2\cdots y_{i-1}$}};

\draw (0,-\h) rectangle (\v,0.4-\h);
\node[above] at (0.4,-\h-0.07) {\footnotesize{$v'$}};
\draw (0,-\h) rectangle (\i,0.4-\h);
\draw (\l-\v,-\h) rectangle (\l,0.4-\h);
\node[above] at (\l-\v+0.4,-\h-0.07) {\footnotesize{$v'$}};
\draw (\l-\v,-\h) rectangle (\l-\v+\i,0.4-\h);
\node[above] at (\l-\v+\i+1,-0.1-\h) {\footnotesize{$b\,z_1z_2\cdots z_{i-1}$}};

\draw [decorate,decoration={brace,raise=2mm,amplitude=5pt,mirror}] (0,-\h/2+0.6) -- (\l-\v+\i,-\h/2+0.6) node[below, pos=.5, yshift=-2.5mm] {\footnotesize{$w_{i+1}w_{i+2}\cdots w_N$}};
\draw [decorate,decoration={brace,raise=2mm,amplitude=5pt}] (0,-\h/2-0.5) -- (\l-\v+\i,-\h/2-0.5);

\draw [decorate,decoration={brace,raise=2mm,amplitude=5pt}] (0,0.45) -- (\u,0.45) node[above, pos=.5, yshift=3mm] {\footnotesize{$u$}};
\draw [decorate,decoration={brace,raise=2mm,amplitude=5pt}] (\l-\u,0.45) -- (\l,0.45) node[above, pos=.5, yshift=3mm] {\footnotesize{$u$}};

\draw [decorate,decoration={brace,raise=2mm,amplitude=5pt,mirror}] (\l-\v,-\h-0.1) -- (\l,-\h-0.1) node[below, pos=.5, yshift=-3mm] {\footnotesize{$v$}};
\draw [decorate,decoration={brace,raise=2mm,amplitude=5pt,mirror}] (0,-\h-0.1) -- (\v,-\h-0.1) node[below, pos=.5, yshift=-3mm] {\footnotesize{$v$}};

\draw[dashed, thin] (0,0) -- (0,-\h);
\draw[dashed, thin] (\l-\v+\i,0) -- (\l-\v+\i,-\h);

\draw[dashed, thin] (\l-\v,0) -- (\l-\v,0.4);
\draw[<->] (\l-\v, 0.15)--(\l-\u, 0.15);
\node[above, thin] at (\l-\v/2-\u/2,0.15) {\tiny{$\leq k$}};

\end{tikzpicture}
\caption{Factor $v'$ overlaps itself with difference smaller than $k$.}
    \label{fig:overlap}
\end{figure}
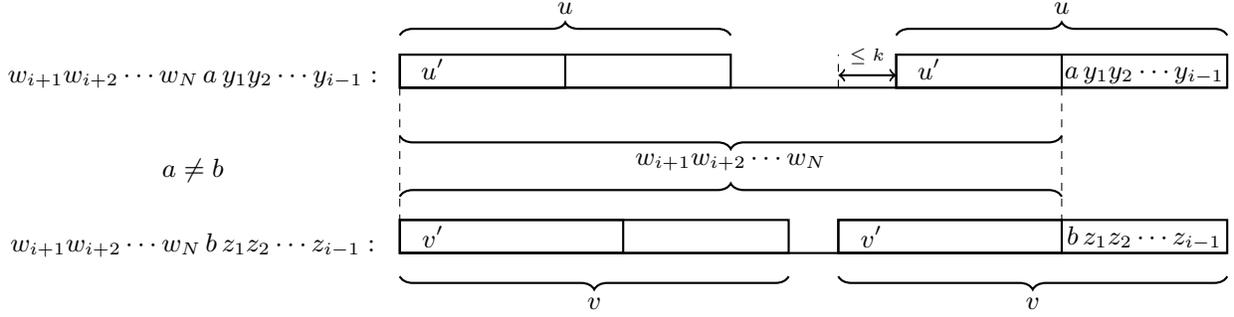

\end{proof}

\section{Words with bounded closed complexity are ultimately periodic}\label{sec:c}
The goal of this section is to prove the following theorem which gives a characterisation of ultimately periodic words in terms of closed complexity.


\begin{theorem}\label{close}
Let $x \in \mathbb{A}^{\mathbb{N}}$ be such that there exist a positive integer $d$ and a syndetic subset $S\subseteq \nats$ with gaps smaller than $d$ on which the closed complexity of $x$ is bounded, i.e.\ there exists $k \in \mathbb{N}$ such that $\cl_x(n) < k$ for every $n \in S$.
Then $x$ is ultimately periodic.
\end{theorem}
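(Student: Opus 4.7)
The plan is to argue by contradiction: suppose $x$ is aperiodic. Fixing $N \in S$ very large, Morse--Hedlund gives $p_x(N) \geq N+1$, so $\op_x(N) \geq N+2-k$ and most factors of length $N$ are open. By Proposition~\ref{5}, the fewer than $k$ closed factors of length $N$ are pairwise distinguished both by their first letters (over any common length-$(N-1)$ suffix) and by their last letters, and by aperiodicity there exists a right-special factor $w$ of length $N-1$ whose at least two right-extensions cannot both be closed.

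The key step combines Corollary~\ref{rauzy2} with the syndetic density of $S$. Since every interval of length $d$ meets $S$, each window $[N, N+D]$ (for a window size $D$ to be chosen depending on $k$ and $d$) contains several elements of $S$ at each of which $\cl_x < k$. For closed factors of lengths in $S \cap [N, N+D]$, Corollary~\ref{rauzy2} bounds the difference of their frontier lengths by the number of distinct open factors on any Rauzy-graph path joining them. A pigeonhole argument over the at-most-$k$ closed factors appearing at each length in $S \cap [N, N+D]$ then yields two closed factors whose frontiers $u_1, u_2$ have nearly-equal length and share a long common border.

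This overlap structure forces the shorter frontier to overlap itself inside the longer with a shift of size controlled by $k$ and $d$. By the reasoning used in the proof of Lemma~\ref{l1}, such a self-overlap in a sufficiently long factor implies that a power $u^L$ of a short primitive word occurs in $x$, with $L$ growing unboundedly with $N$. Since $x$ is aperiodic, arbitrarily long powers of a fixed short word cannot coexist in $x$ without forcing ultimate periodicity with period dividing $|u|$, which is the desired contradiction. A subsidiary reduction to the uniformly recurrent case, via a uniformly recurrent $y \in \Omega(x)$ inheriting the closed-complexity bound $\cl_y(n) \leq \cl_x(n) < k$ for $n \in S$, streamlines the Rauzy-graph arguments and lets one freely invoke the first-return machinery.

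The principal obstacle is arranging the pigeonhole step so that the frontier-length bound from Corollary~\ref{rauzy2} combined with the syndetic gap $d$ actually produces a pair of closed factors with frontiers close enough in length and with a sufficiently long common border. Carefully balancing $N$, $D$, $k$, and $d$ against the open-factor count appearing in Corollary~\ref{rauzy2} is what drives the resulting power $u^L$ to have $L\to\infty$ as $N\to\infty$, and hence to force the final contradiction with aperiodicity.
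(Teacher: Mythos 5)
Your overall strategy---contradiction via aperiodicity, a pigeonhole on the few closed factors, and a reduction to the uniformly recurrent case---does not survive scrutiny, and it diverges from what actually works here. The central problem is that the Rauzy-graph machinery you invoke (Proposition~\ref{5}, Corollary~\ref{rauzy2}) is calibrated for the \emph{dual} hypothesis of Section~\ref{sec:o}, namely bounded \emph{open} complexity. Corollary~\ref{rauzy2} bounds $||u_1|-|u_2||$ by the number of distinct open factors on a path joining the two closed factors; under the hypothesis of Theorem~\ref{close} the open factors are abundant (indeed $\op_x(N)\geq N+1-k$ by your own Morse--Hedlund step), so every path between two closed vertices typically passes through many open vertices and the corollary yields only a vacuous bound. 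Your pigeonhole controls the number of closed factors at each length in $S\cap[N,N+D]$, but says nothing about the open factors on the connecting paths, which is what the corollary actually charges for. Moreover, Corollary~\ref{rauzy2} compares closed factors of the \emph{same} length $N$ inside one Rauzy graph; you want to compare closed factors of different lengths ranging over $S\cap[N,N+D]$, which it does not address.

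The endgame also contains a false step: you claim that an aperiodic word cannot contain arbitrarily long powers $u^L$ of a fixed short word $u$. The word $aba^2ba^3ba^4b\cdots$ is aperiodic and contains $a^L$ for every $L$, so producing such powers is not by itself a contradiction. Relatedly, passing to a uniformly recurrent $y\in\Omega(x)$ is not a harmless ``streamlining'': showing that such a $y$ is periodic does not imply that $x$ is ultimately periodic, and bridging that gap is precisely where most of the work lies. The paper's proof takes a genuinely different route that never touches Rauzy graphs: it shows (Lemma~\ref{branch}) that if a recurrent factor $u$ had unique long extensions $r$ and $s$ on both sides, then a complete first return $w$ to $rus$ would contain $k+1$ distinct closed factors of a single length in $S$ (obtained by sliding a window of borders $r_i\cdots r_1us_1\cdots s_{d'+k-i}$ across $w$), contradicting $\cl_x<k$ on $S$; it then uses this branching property to show uniformly recurrent words satisfying the hypothesis are periodic (Corollary~\ref{unifrec}), bounds the periods of periodic words in $\Omega(x)$ by exhibiting $|u|$ distinct closed factors from the rotations of $u$ (Lemma~\ref{period}), and finally constructs a $y\in\Omega(x)$ avoiding all high powers so that the uniformly recurrent $z\in\Omega(y)$ cannot be periodic---the actual contradiction. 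You would need to supply arguments of this kind; the steps you propose in their place do not go through.
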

In what follows, $x$, $k$ and $S$ are defined as in the theorem.

The following lemma states that every recurrent factor is close to being right or left special.

\begin{lemma}{}\label{branch}
Let $x$ be aperiodic, and $u$ be a recurrent factor of $x$. 
Let $r$ and $s$ be words of lengths $k$ and $k+d$ respectively, such that $rus\in\Fact(x)$. Then there exist proper (probably, empty) suffix $r'$ of $r$ and prefix $s'$ of $s$, such that $r'us'$ is either right or left special in $x$.
\end{lemma}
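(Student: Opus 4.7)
The plan is to argue by contradiction: assume that no word $r'us'$ with $r'$ a proper (possibly empty) suffix of $r$ and $s'$ a prefix of $s$ is right- or left-special in $x$, and derive a contradiction.

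First, I would iterate the hypothesis one letter at a time to obtain a deterministic context around $u$. Since $u$ itself (taking $r'=s'=\epsilon$) is not right-special, it has a unique right-extension letter, which must be $s_1$ because $us \in \Fact(x)$. Applying the hypothesis successively with $s' = s_1,\ s_1 s_2,\ \ldots,\ s_1 \cdots s_{k+d-1}$ (each of length at most $k+d-1$) shows that every occurrence of $u$ in $x$ is followed by exactly $s$. Symmetrically, using the proper suffixes of $r$ of lengths $0, 1, \ldots, k-1$ forces every occurrence of $u$ in $x$ to be preceded by exactly $r$. So every occurrence of $u$ in $x$ carries the rigid context $rus$.

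I would then distinguish two cases according to whether $u$ also occurs at some internal position $j\neq k$ inside the word $rus$. If yes, then the corresponding occurrence of $u$ inside an arbitrary copy of $rus$ in $x$ must, by the deterministic context just established, itself be surrounded by a copy of $rus$. This forces a second copy of $rus$ in $x$ at distance $|j-k| \leq k+d < |rus|$ from the original, and the resulting overlap gives $rus$ a period strictly smaller than its length. Propagating this period through the infinitely many occurrences of $u$ (which by assumption all sit inside copies of $rus$) shows that the right-infinite tail of $x$ is periodic, contradicting the aperiodicity of $x$.

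Otherwise $u$ occurs only at the central position of $rus$, and I would invoke the closed-complexity hypothesis. For every pair of consecutive occurrences $n_i<n_{i+1}$ of $u$ in $x$, the first-return word $W_i = x_{n_i}\cdots x_{n_{i+1}+|u|-1}$ has $u$ appearing precisely twice, at its prefix and its suffix, so $W_i$ is closed with frontier $u$, of length $(n_{i+1}-n_i)+|u|$. Aperiodicity prevents all the $W_i$ from coinciding (otherwise $x$ is eventually a periodic concatenation of the common return word); combining this with the syndeticity of $S$ (gaps smaller than $d$) and the bound $\cl_x(n)<k$ on $n\in S$, a counting argument would produce $k$ distinct closed factors at a common length lying in $S$, contradicting the bound.

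The main obstacle is this last step: translating the variation among the closed first-return words $W_i$ into $k$ distinct closed factors at a single length belonging to $S$. The delicate point is that different $W_i$ may have different lengths, so one needs to trim or otherwise extract closed subfactors of a common length lying in $S$, using the syndetic gap bound $d$; this is where the so-far-unused hypothesis $\cl_x(n)<k$ on the syndetic set $S$ must finally be invoked.
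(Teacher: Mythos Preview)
Your setup is correct and matches the paper: negate the conclusion, deduce that every occurrence of $u$ (in a suitable suffix of $x$) is preceded by $r$ and followed by $s$. After that, however, your proof is genuinely incomplete, and you say so yourself.

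The case split A/B is unnecessary, and Case~B is where the real content lies --- yet there you only gesture at ``a counting argument'' and admit you do not see how to produce $k$ closed factors of one common length in $S$. Looking at many different first returns $W_i$ to $u$ will not do this: an aperiodic word can have as few as two return words to $u$ (Sturmian words do), so you may get closed factors of only two distinct lengths, far from the $k$ you need at a single length.

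The missing idea, which the paper supplies, is to work with a \emph{single} complete first return $w$ to $rus$ (not to $u$) and to \emph{slide a window of fixed length} across it. Choose $d'\le d$ with $|w'|:=|w|-(k+d)+d'\in S$ (possible by syndeticity). For $i=0,\dots,k$ let $c_i$ be the length-$|w'|$ factor of $w$ starting at position $k-i+1$; then $c_i$ has the border
\[
r_i\cdots r_1\,u\,s_1\cdots s_{d'+k-i},
\]
all of the same length $|u|+k+d'$. These $k+1$ words are pairwise distinct and closed, because any internal occurrence of such a border, or any coincidence $c_i=c_{i'}$, would force a third occurrence of $u$ inside $w$; by the rigid context this would extend to a third occurrence of $rus$, contradicting that $w$ is a \emph{first} return to $rus$. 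This yields $k+1$ closed factors of length $|w'|\in S$, the desired contradiction.

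Note also that this argument absorbs your Case~A: passing to first returns to $rus$ rather than to $u$ makes the ``extra internal occurrence of $u$ in $rus$'' situation harmless --- it simply cannot happen inside a complete first return to $rus$, for the reason just given. So the periodicity detour in your Case~A, while plausible, is not needed.
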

\begin{proof}
Let us suppose the contrary, i.e.\ there exists a recurrent factor $u$ of $x$, such that $ru=r_k\cdots r_{1}u$ is its only recurrent left extension, and $us=us_1\cdots s_{k+d}$ is its only recurrent right extension. Up to considering a suffix $y$ of $x$, we can assume that every occurrence of $u$ is preceded by $r_k\cdots r_s$ and is followed by $s_1\cdots s_{k+d}$. 

Let us consider any complete first return to $rus$ denoting it with $w$. 
Note that $w$ does not contain any occurrences of $u$ but the two from its frontier $rus$: any extra occurrence of $u$ would add its unique extension $rus$ in $w$.

Let us take $d' \leq d$ such that the length of the prefix $w'$ of $w$ is $|w'|=|w|-k-d+d'\in S$.

We will now show that $w$ contains at least $k+1$ distinct closed factors of length $|w'|$, contradicting the assumption of Theorem~\ref{close}. Let us consider the set $C=\{c_i \}_{i=0}^k$ of factors of $w$, such that every $c_i$ has the border $r_{i}\cdots r_1us_1\cdots s_{d'}\cdots s_{d'+k-i}$ (see Figure~\ref{fig:kclos}). 
We claim that all words from $C$ are distinct and closed. Indeed, if a border of some factor from $C$ occurs in it internally, or if two factors from $C$ are equal, there is an extra occurrence of $u$ in $w$. Thus, there are at least $k+1$ distinct closed factors of $x$ of length $|w'|\in S$.
\end{proof}


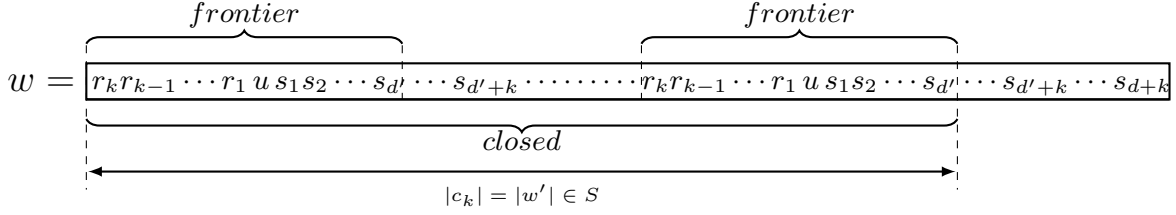
\begin{figure}[H]
\centering
\begin{tikzpicture}[thick,scale=1.2, every node/.style={scale=1.2}, >=latex] 
\centering
\pgfmathsetmacro\l{10} 
\pgfmathsetmacro\i{2.2}
\pgfmathsetmacro\u{3.5} 
\pgfmathsetmacro\h{1} 

\node[above] at (-0.5,-0.05) {\large{$w=$}};
\draw (0,0) -- (\l+2,0);

\draw (0,0) rectangle (\l+2,0.4);
\node[above] at (\l/2+1.04,-0.1) {\footnotesize{$\, r_kr_{k-1}\cdots r_1\, u\, s_1s_2\cdots s_{d'}\cdots s_{d'+k}\cdots\cdots\cdots r_kr_{k-1}\cdots r_1\, u\, s_1s_2\cdots s_{d'}\cdots s_{d'+k}\cdots s_{d+k}\, $}};

\draw [decorate,decoration={brace,raise=2mm,amplitude=5pt,mirror}] (0,-\h/2+0.45) -- (\l-0.35,-\h/2+0.45) node[below, pos=.5, yshift=-1.5mm] {\footnotesize{$closed$}};

\draw [decorate,decoration={brace,raise=2mm,amplitude=5pt}] (0,0.45) -- (\u,0.45) node[above, pos=.5, yshift=2mm] {\footnotesize{$frontier$}};
\draw [decorate,decoration={brace,raise=2mm,amplitude=5pt}] (\l-0.35-\u,0.45) -- (\l-0.35,0.45) node[above, pos=.5, yshift=2mm] {\footnotesize{$frontier$}};

\draw[dashed, thin] (0,0.6) -- (0,-\h);
\draw[dashed, thin] (\u,0.6) -- (\u,0);
\draw[dashed, thin] (\l-0.35-\u,0.6) -- (\l-0.35-\u,0);
\draw[dashed, thin] (\l-0.35,0.6) -- (\l-0.35,-\h);

\draw[<->] (0.02, -\h+0.2)--(\l-0.35-0.02, -\h+0.2);
\node[below] at (\l/2-0.175,-\h+0.2) {\tiny{$|c_k|=|w'|\in S$}};

\end{tikzpicture}
\caption{$k+1$ distinct closed factors of $x$.}
    \label{fig:kclos}
\end{figure}

\medskip
\begin{corollary}\label{unifrec}
If $x$ is uniformly recurrent, then it is periodic.
\end{corollary}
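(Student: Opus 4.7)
The plan is a proof by contradiction. Suppose $x$ is uniformly recurrent and aperiodic. Since $x$ is uniformly recurrent, every factor of $x$ is recurrent, so Lemma~\ref{branch} (and the argument inside its proof) applies to every factor of $x$.

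My target is to exhibit a factor $u\in\Fact(x)$ together with its unique length-$k$ left extension $ru=r_k\cdots r_1 u$ and its unique length-$(k+d)$ right extension $us=us_1\cdots s_{k+d}$ in $x$. Once such a $u$ is in hand, we are in exactly the ``suppose the contrary'' case opened at the start of the proof of Lemma~\ref{branch}: any complete first return $w$ to $rus$ contains only the two occurrences of $u$ coming from the two copies of $rus$ at its ends (any extra occurrence would be forced by uniqueness of extensions to produce another copy of $rus$ inside $w$, violating ``first return''), and the family $\{c_i\}_{i=0}^{k}$ constructed there gives $k+1$ pairwise distinct closed factors of $x$ of the common length $|w|-k-d+d' \in S$, contradicting $\cl_x(n)<k$ on $S$.

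To locate such $u$, I work in the Rauzy graph $G_N$ of $x$ for $N$ large. Uniform recurrence makes $G_N$ strongly connected, and aperiodicity gives $p_x(N)\to+\infty$ and prevents forced unique-extension chains from closing into cycles. Every non-right-special vertex has out-degree $1$ in $G_N$, so following out-edges from it yields a forced forward chain which, by finiteness of $G_N$ and aperiodicity, must eventually hit a right-special vertex; symmetrically, following in-edges from a non-left-special vertex yields a forced backward chain which must hit a left-special vertex. The numbers of right-special and of left-special vertices in $G_N$ are each bounded by $p_x(N+1)-p_x(N)$, so a counting argument produces, for $N$ chosen large enough along a suitable subsequence, a vertex $u$ whose forward chain to the nearest right-special vertex has length at least $k+d$ and whose backward chain to the nearest left-special vertex has length at least $k$. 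This $u$ is the factor we need.

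The main technical obstacle is enforcing the two chain-length lower bounds at a single vertex. Concretely, the ``bad'' vertices --- those within forward distance $<k+d$ of some right-special vertex, or within backward distance $<k$ of some left-special vertex --- number at most $(2k+d)\bigl(p_x(N+1)-p_x(N)\bigr)$, while $p_x(N)\to+\infty$. Picking $N$ with $p_x(N)>(2k+d)\bigl(p_x(N+1)-p_x(N)\bigr)$ leaves a non-bad vertex in $G_N$, which is the desired $u$; such $N$ must exist since otherwise $p_x$ would grow exponentially with ratio at least $1+\tfrac{1}{2k+d}$, a regime that already forces the closed complexity of $x$ to blow up on any syndetic subset and hence contradicts the hypothesis directly. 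In either branch a contradiction arises, completing the proof.
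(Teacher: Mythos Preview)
Your plan has a structural circularity that collapses the whole argument. You aim to locate a recurrent factor $u$ with a \emph{unique} length-$k$ left extension and a \emph{unique} length-$(k+d)$ right extension, so as to replay the ``suppose the contrary'' computation from the proof of Lemma~\ref{branch}. But Lemma~\ref{branch} (already proven, and applicable here since every factor of a uniformly recurrent word is recurrent) states precisely that no such $u$ exists: every recurrent factor becomes right- or left-special within those many steps. Hence your Case~1 (``a non-bad vertex exists'') is \emph{vacuous}, and the entire argument rests on your Case~2 escape hatch.

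That escape hatch is the unjustified assertion that exponential growth of $p_x$ ``already forces the closed complexity of $x$ to blow up on any syndetic subset.'' You give no argument for this, and none is available from the tools at hand (Propositions~\ref{l3}--\ref{5}, Corollary~\ref{rauzy2}); it is not a known fact, and proving it would be at least as hard as the corollary you are trying to establish. Note also that your argument never uses uniform recurrence in an essential way beyond ``every factor is recurrent'' --- a sign that the contradiction you seek cannot be coming from where you think.

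A secondary issue: your count of bad vertices is not right. A right-special vertex $w$ can have many predecessors (if some are left-special), so the set of vertices at forward distance $<k+d$ from $w$ need not have size $\leq k+d$. The correct bound is $|\text{bad-right}|\leq p_x(N+k+d)-p_x(N)$ (since a vertex $v$ is bad-right iff it has $\geq 2$ right extensions of length $k+d$), and similarly for bad-left; this still yields exponential growth in Case~2, but does not rescue the argument.

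The paper's proof goes in the opposite direction: it \emph{uses} Lemma~\ref{branch} (rather than trying to contradict it) to build a factor $u$ with no short self-overlap, and then --- exploiting uniform recurrence essentially --- constructs arbitrarily long factors avoiding $u$ by choosing the ``wrong'' branch at each special point guaranteed by Lemma~\ref{branch}, contradicting the bounded-gap property of $u$.
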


\begin{proof}
Let us suppose that $x$ is aperiodic (a uniformly recurrent word that is ultimately periodic is periodic). By Lemma~\ref{branch}, every factor is either close to being left special or is close to being right special. 
The idea of the proof is the following: using Lemma~\ref{branch}, we can produce a factor $u$ of $x$ such that it has a long first return. More precisely, if there exist words $w$ and $v$ such that $uw=vu$, then $|v|>2k+d$. Then using the same lemma we can construct an arbitrary long factor of $x$ that does not contain $u$, contradicting the uniform recurrence of $x$.

Let us begin with considering a recurrent factor $u$ of $x$. Without loss of generality we can assume it to be close to being right special.

Using Lemma~\ref{branch}, we can extend $u$ in a way that if there exist two factors $v$ and $w$ such that $uw=vu$, then $|v|>1$: at the first branching point $a_p$, where $u_1\cdots u_na_1\cdots a_p$ is a recurrent extension of $u$ and $p<k+d$, it is sufficient to take $a_{p+1} \neq a_{p}$ (or $a_1 \neq u_{n}$ if $u$ is right special). 
For the word $u^{(1)}=u_1\cdots u_na_1\cdots a_{p+1}$ we consider its extension $u^{(1)}_1\cdots u^{(1)}_{n+p+1}a'_1\cdots a'_{p'}$ to the right until the closest branching point if it is close to being right special, or its extension $a'_{p'}\cdots a'_1u^{(1)}_1\cdots u^{(1)}_{n+p+1}$ to the left if it is close to being left special. We denote the corresponding extension with $u^{(2)}$. 

To assure that if $u^{(2)}w'=v'u^{(2)}$ for some factors $v',w'$, then $|v'|>2$, we choose $a'_{p'+1}$ the following way. 
If $u^{(2)}$ is close to being right special and $p'< 2$, we take $a'_{p'+1}\neq u^{(1)}_{n+p+p'}$; if $u^{(2)}$ is close to being left special and $p'< 2$, we take $a'_{p'+1}\neq u^{(1)}_{2}$; otherwise $a'_{p'+1} \neq a'_{p'-1}$.

We apply recursively the same reasoning $2k+d+1$ times and obtain a recurrent factor $u^{(2k+d+1)}$ that for some factors $\bar w$ and $\bar v$ satisfies $\left(u^{(2k+d+1)}\bar w=\bar vu^{(2k+d+1)}\Rightarrow |\bar v|>2k+d+1\right)$. Let us note that the length of $u^{(2k+d+1)}$ is at least $2k+d+2$. 
For simplicity of notation, this factor is denoted by $u$ in the rest of the proof.

Since $x$ is uniformly recurrent, there exists $m \in \mathbb{N}$ such that every factor of $x$ of length $m$ contains $u$. 
Let us construct a factor contradicting this. 

We start with $u_{k+1}\cdots u_{|u|-(k+d)}$ and go to the next branching point given by Lemma \ref{branch}, that is either $u_{k+1}\cdots u_{|u|-(k+d)+p}$ for some $p<k+d$ or $u_{q}\cdots u_{|u|-(k+d)}$ for some $1<q\leq k$. 
At this point we choose a letter that differs from $u_{|u|-(k+d)+p+1}$ or from $u_{q-1}$. This ensures that $u$ does not occur before the next branching point.

\begin{figure}[H]
\centering
\begin{tikzpicture}[thick,scale=1.2, every node/.style={scale=1.2}, >=latex]
\usetikzlibrary{arrows}
\usetikzlibrary{shapes}
\centering
\pgfmathsetmacro\h{0.5}
\pgfmathsetmacro\u{5.7} 
\pgfmathsetmacro\i{1.5}

\node (0) at (-0.32,0) {$v_1v_2\,v_3\cdots\cdots\cdot\cdot v_j$};
\node (a) at (2.5,\h) {};
\node (b) at (2.5,-\h) {};
\node at (4.3,-\h) {$b\,b_2\,b_3\cdots\cdots\cdots\cdots b_{p'}$};
\node at (3.6,\h-0.1) {$a\,a_2a_3\cdots\cdot a_{p}$};
\node at (\u+2.3,0) {\footnotesize{$p'-p \leq 2k+d$}};

\draw[thin,->] (0) -- (a);
\draw[thin,->] (0) -- (b);

\draw [decorate,decoration={brace,raise=2mm,amplitude=5pt,mirror}] (0.4,-0.8) -- (0.4+\u,-0.8) node[below, pos=.5, yshift=-2.5mm] {$v$};
\draw [decorate,decoration={brace,raise=4mm,amplitude=5pt}] 
(0.5-\i,0.5) -- (0.5+\u-\i,0.5) node[above, pos=.5, yshift=4.5mm] {$u$};

\draw[dashed, thin] (0.4+\u,-0.8) -- (0.4+\u,0.7);
\draw[dashed, thin] (0.5+\u-\i,-0.2) -- (0.5+\u-\i,0.7);
\draw[<->] (0.55+\u-\i, 0.4) -- (0.35+\u, 0.4);
\node[above] at (\u-0.3,0.4) {\tiny{$p'-p$}};

\draw[dashed, thin] (0.4,-0.8) -- (0.4,-0.1);
\draw[dashed, thin] (0.5-\i,-0.6) -- (0.5-\i,0.7);
\draw[<->] (0.55-\i, -0.5) -- (0.35, -0.5);
\node[below] at (-0.3,-0.4) {\tiny{$p'-p$}};

\end{tikzpicture}
\caption{Equality $u = v$ would produce an overlap of $u$.}
    \label{fig:avoidu}
\end{figure}
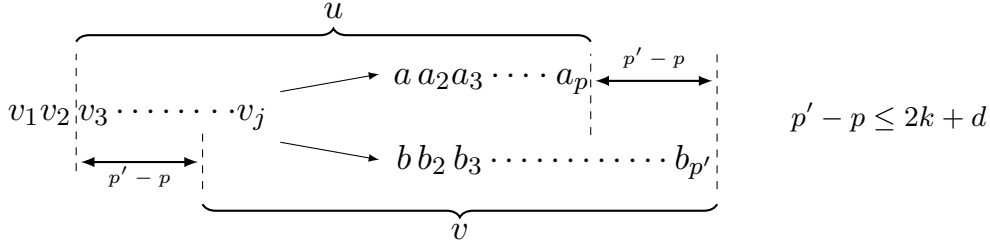


This reasoning can be applied to construct our factor recursively: at each branching point, knowing that $u$ does not appear before we can choose a branch such that $u$ will not occur after adding of any $2k+d+1$ letters to the right or to the left. Indeed, if it was not the case it would mean $u$ appears in both branches (see Figure~\ref{fig:avoidu} for the right special case). This allows us to construct, in at most $m$ steps, a factor longer than $m$ that does not contain $u$.
\end{proof}

The following lemma states that every periodic word in the subshift of $x$ has short period.

\begin{lemma}\label{period} Let $u$ be a primitive word in $\Fact(x)$ such that $u^n\in \Fact(x)$ for every $n\in \mathbb{N}$.\\ Then $|u|<k$.
\end{lemma}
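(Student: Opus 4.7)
The plan is to argue by contradiction: assume $\ell := |u| \geq k$ and then produce at least $\ell$ pairwise distinct closed factors of $x$ of some common length $L \in S$, which will contradict the hypothesis $\cl_x(L) < k$.

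First, I would pick any $L \in S$ with $L \geq 2\ell$; such an $L$ exists because $S$ is syndetic, hence unbounded. Write $L = m\ell + r$ with $0 \leq r < \ell$ and $m \geq 2$. Since $u^n \in \Fact(x)$ for every $n$, I can choose $M$ large enough so that $u^M \in \Fact(x)$ and every length-$L$ factor of the purely periodic infinite word $u^{\nats}$ occurs inside $u^M$. For each $i \in \{0,1,\dots,\ell-1\}$, let $u^{(i)}$ denote the cyclic rotation of $u$ by $i$ positions and let $w_i$ be the length-$L$ factor of $u^M$ starting at position $i$. A direct computation gives $w_i = (u^{(i)})^m \, p_i$, where $p_i$ is the length-$r$ prefix of $u^{(i)}$. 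Because $u$ is primitive, the rotations $u^{(0)},\dots,u^{(\ell-1)}$ are pairwise distinct, so $w_0,\dots,w_{\ell-1}$ are also pairwise distinct. This furnishes $\ell$ distinct factors of $x$ of length $L$.

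The key step is to show each $w_i$ is closed. I set $v_i := (u^{(i)})^{m-1}\, p_i$, a word of length $L - \ell$, and check that $v_i$ is simultaneously a prefix and a suffix of $w_i$, so $v_i$ is a border of $w_i$. It remains to show $v_i$ has no internal occurrence in $w_i$. If $w_i[j..j+|v_i|-1] = v_i$ for some $0 < j < \ell$, then looking at the first $\ell$ symbols of this occurrence gives $w_i[j..j+\ell-1] = u^{(i)}$. But $w_i$ has period $\ell$, so $w_i[j..j+\ell-1] = u^{(i+j \bmod \ell)}$, and primitivity of $u$ forces $i + j \equiv i \pmod \ell$, contradicting $0 < j < \ell$. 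Hence $v_i$ occurs in $w_i$ exactly twice, as prefix and as suffix, so $w_i$ is closed.

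Combining the two halves, $\cl_x(L) \geq \ell \geq k$ while $L \in S$, contradicting the assumption $\cl_x(L) < k$. Therefore $|u| < k$. I expect the main obstacle to be the uniqueness argument in the third step, i.e.\ ruling out internal occurrences of the candidate frontier $v_i$; everything else amounts to careful bookkeeping with cyclic rotations and relies only on the primitivity of $u$.
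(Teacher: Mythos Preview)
Your proof is correct and follows essentially the same approach as the paper's: both arguments exhibit $|u|$ pairwise distinct closed factors of some common length in $S$ by taking the $|u|$ cyclic rotations of $u$, raising each to a suitable power with a short prefix appended, and using primitivity (no nontrivial rotation of $u$ equals $u$) to rule out internal occurrences of the candidate border. The only cosmetic difference is that you phrase it as a contradiction from $|u|\geq k$, whereas the paper simply counts and concludes $|u|<k$ directly.
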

\begin{proof}
Let $p<|u|$ and $n>2$ be such that $p+n|u|\in S$. 
Let us denote the $i$-th rotation of $u=u_1\cdots u_{|u|}$ by $r^i(u)=u_i\cdots u_{|u|} u_1\cdots u_{i-1}$. 
Since $u$ is primitive, so is $r^i(u)$, hence each $(r^i(u))^nu_i\cdots u_{i+p}$ is closed with frontier $(r^i(u))^{n-1}u_i\cdots u_{i+p}$. Indeed, if the frontier had an internal occurrence, then $r^i(u)$ would be an internal factor of  $(r^i(u))^2$, contradicting primitivity of $r^i(u)$. 
All the rotations $r^i(u)$ are pairwise distinct, and so are all closed factors $(r^i(u))^nu_i\cdots u_{i+p}$, $i=1,2,3,\dots,|u|$. By the assumption of Theorem~\ref{close}, $\cl_x(n) < k$ and thus $|u|<k$.
\end{proof}

\begin{proof}[Proof of Theorem$~\ref{close}$]
Let us suppose that $x$ is aperiodic and consider the set $P_x=\lbrace u \in \Fact(x) \,|\, \forall n \in \mathbb{N}, u^n\in \Fact(x)\rbrace$. 
By Lemma~\ref{period}, $P_x$ is finite. According to Lemma \ref{branch}, there exists $N\in \mathbb{N}$ such that we can produce an infinite word $y$ in the subshift  $\Omega(x)$ that does not contain $u^N$ as a factor for any $u \in P_x$. 
Moreover, since $y\in \Omega(x),$ it verifies Lemmas \ref{branch} and \ref{period}, and thus $P_y=\emptyset$. 
Let $z$ be any uniformly recurrent word in $\Omega(y)$. By Corollary~\ref{unifrec} the word $z$ is periodic and can be represented as $z=u^{\nats }$ for some finite word $u$. Then $u \in P_y$, what contradicts $P_y=\emptyset$. Hence $x$ is ultimately periodic. 
\end{proof}

\section{Concluding remarks and open questions}
 
Combining Theorem \ref{open}, Theorem \ref{close} and the fact that $\op_x(n)+\cl_x(n)=p_x(n)$ for every $n\in\nats$, we obtain the following result: 
 
\begin{manualtheorem}{\ref{th:main}$'$}
Let $x$ be an infinite word over a finite alphabet. The following are equivalent:
\begin{enumerate}
    \item $x$ is aperiodic;
    \item  $\liminf\limits_{n\rightarrow+\infty} \op_x(n) = +\infty$;
    \item $\forall S \subseteq \nats \text{ syndetic, }\limsup\limits_{n\in S} \cl_x(n) = +\infty$.
\end{enumerate}
\end{manualtheorem}

Since the factor complexity of an aperiodic word is a strictly increasing function,  $\liminf{p_x(n)}= + \infty$ is equivalent to $p_x$ being unbounded. However, it is not always the case for open and closed complexity functions (ex.\ see~\cite{ParZamb}).
Even though the result we obtained in terms of open complexity is as strong as Morse-Hedlund theorem since it is expressed in terms of $\liminf \op_x(n)$, the characterisation in terms of closed complexity cannot be improved to the same setting. In fact, it is already known that some aperiodic words can have $\liminf{\cl_x(n)}<+\infty$. 
For example, L.~Schaeffer and J.~Shallit showed in~\cite{SchSh} that $\liminf{\cl_x(n)}=0$ when $x$ is the paperfolding word. 
It is even possible for pure morphic words to have finite limit inferior for the closed complexity: For example, for the celebrated Cantor word $c$, also sometimes referred to as the Sierpinski word, one verifies that $\liminf{\cl_c(n)}=1$, and this value is attained for $n=7\cdot 3^k + 1$, for any $k$. 
The proof of this result can be easily obtained with case by case study, and thus is omitted; however it leads to the following question:

\begin{question}
For any $k\in \nats$, is it possible to find an aperiodic pure morphic word $x$ such that $\liminf{\cl_x(n)}=k$?
\end{question}

Although it is not possible to have the equivalence $\liminf{\cl_x(n)}$ bounded $\Longleftrightarrow x$ ultimately periodic, it might still be possible to obtain something stronger than our theorem: we already improved the first version of the theorem to the setting of syndetic sets, but it may be possible to get the same result in the case of piecewise syndetic sets. A set $S$ is piecewise syndetic if it is the intersection of a syndetic set and a thick set, hence if there are arbitrarily long intervals where it has bounded gaps.
\begin{question}
Is it true that, for any aperiodic word $x$ and any piecewise syndetic set $S$,\\ $\limsup\limits_{n\in S}{\cl_x(n)}=+\infty$?
\end{question}


\section*{Acknowledgements}
We would like to thank both Gabriele Fici and Luca Zamboni for suggesting this problem to us and for many useful discussions.

\bibliographystyle{abbrv}
\bibliography{ref}

\end{document}